\DeclareMathOperator{\ex}{ex}
\newcommand{\calp}{\mathcal{P}}
\newcommand{\cals}{\mathcal{S}}
\newcommand{\cG}{\mathcal{G}}
\newcommand{\sM}{\mathsf{M}}
\newcommand{\astk}{\mathord{*}}
\newcommand{\bslash}{\mathbin{/}}
\tikzset{vtx/.style={inner sep=1.5pt,fill,circle}}
\title{Series-parallel graphs and hypercube degeneracy}
\author{Daniel G.\ Zhu}
\email{zhd@princeton.edu}
\address{Department of Mathematics, Princeton University, Princeton, NJ 08544, USA}
\begin{document}
\begin{abstract}
Several recent works have identified patterns that must exist in dense subsets of either the vertices or the edges of a large hypercube. We introduce a framework, based on the concept of series-parallel graphs, that unifies and generalizes these results.
\end{abstract}
\maketitle

\section{Introduction}
In extremal combinatorics, one of the most fundamental questions one can ask is a Tur\'an-type question, which, broadly speaking, asks to determine $\ex(\cals, \calp)$, the maximum size of a subset of some large ``host structure'' $\cals$ that does not contain some pattern $\calp$. For instance, a classical question concerns the case when $\cals$ is (the edges of) the complete graph $K_n$ on $n$ vertices, and $\calp$ is some graph $H$. In this case, there is a fundamental dichotomy for the behavior of $\ex(K_n, H)$: if $H$ is bipartite, the Tur\'an problem is said to be ``degenerate'' and $\ex(K_n, H) = o(n^2)$, whereas otherwise, we have $\ex(K_n, H) = \Theta(n^2)$ (see e.g.\ \cite{FSdegen}). Similar results hold in other settings as well: in the case of $r$-uniform hypergraphs the degenerate patterns are the $r$-partite $r$-uniform hypergraphs, while in the case of directed graphs the degenerate patterns are the (bipartite) directed graphs where every vertex is a source or a sink.

The topic of this paper is the classification of degenerate patterns in two related settings: vertices and edges of the $n$-dimensional hypercube $Q_n$. Specifically, given nonnegative integers $d \leq n$, define an \vocab{embedding} $Q_d \to Q_n$ to be the identification of $Q_d$ with an $d$-dimensional face of $Q_n$. Then, for a subset $X \subseteq V(Q_d)$, let $\ex(Q_n, X)$ be the maximum size of a subset $S \subseteq V(Q_n)$ such that $\psi(X) \nsubseteq S$ for any embedding $\psi\colon Q_d \to Q_n$. Similarly, for a subset $Y \subseteq E(Q_d)$, let $\ex(Q_n, Y)$ be the maximum size of a subset $S \subseteq E(Q_n)$ such that $\psi(Y) \nsubseteq S$ for any embedding $\psi\colon Q_d \to Q_n$. Call a set of vertices $X$ or a set of edges $Y$ \emph{hypercube-degenerate} (abbreviated to \emph{$h$-degenerate} in this paper) if $\ex(Q_n, X) = o(2^n)$ or $\ex(Q_n, Y) = o(e(Q_n)) = o(n2^{n})$, respectively.

In the case of vertices, this notion of $\ex(Q_n, X)$ was introduced by Johnson and Talbot \cite{JT10}, who made a conjecture equivalent to the statement that the $h$-degenerate $X$ are precisely those that are a subset of some layer of a hypercube. This conjecture was also made independently by Bollob\'as, Leader, and Malvenuto \cite{BLM11}. However, this conjecture was recently disproved by Ellis, Ivan, and Leader \cite{EIL24}, who exhibited a construction, based on linear algebra over the finite field $\setf_2$, which both has positive density and avoids the middle layer of the cube $Q_4$. Expanding on this idea, recent work of Alon \cite{Alon24} tightened both lower and upper bounds for the size of the largest $h$-degenerate subset of $V(Q_d)$.

The case of edges has a longer history and has focused on the weaker notion of graph containment; that is, given a graph $H$, the focus of study has been the quantity $\ex(Q_n, H)$, the largest number of edges of a subgraph of $Q_n$ containing no copy of $H$, irrespective of how $H$ is embedded within the cube. If $\ex(Q_n, H) = o(e(Q_n))$, we call $H$ \vocab{$g$-degenerate}. (As will be discussed later in \cref{sec:containment}, the concept of $h$-degeneracy subsumes $g$-degeneracy.) In 1984, Erd\H{o}s conjectured that all even cycles $C_{2t}$ with $t \geq 3$ are $g$-degenerate \cite{Erd84}, which was later proved for all $t\notin\set{3,5}$ \cite{Chung92,FO11} but disproved for $t = 3$ \cite{Chung92} and $t = 5$ \cite{GM24+}. The $g$-degeneracy of $C_{2t}$ for $t \in \set{4,6,7,8,\ldots}$ was generalized by Conlon \cite{Conlon10}, who showed that all graphs with a \emph{partite representation} (to be defined later in \cref{def:partite}) are $g$-degenerate, a result that was later applied in \cite{Mar22,AMW24}. Recently, Axenovich \cite{Axen23} further showed that graphs whose blocks all have partite representations are $g$-degenerate.

This paper identifies a family of $h$-degenerate vertex patterns and a family of $h$-degenerate edge patterns, which, surprisingly, are most naturally defined in terms of the notion of series-parallel graphs from structural graph theory. As far as we are aware, this framework unifies every known $h$-degenerate pattern in the literature and uncovers many more that were previously unknown. Furthermore, there are philosophical reasons (see \cref{sec:opt}) to believe that our result could be the complete classification of $h$-degenerate patterns, though we do not make this assertion with much confidence, considering the spotty track record of previous conjectures in this area.

To state our results, we recall that a \emph{series-parallel graph} is a multigraph with no $K_4$ minor. Given a connected series-parallel graph $G$, let $X(G) \subseteq 2^{E(G)}$ be the set of edge sets of spanning trees of $G$; given an ordering of $E(G)$, this can be interpreted as a subset of $V(Q_{e(G)})$. Further suppose an edge $e \in E(G)$ is neither a bridge nor a loop, and let $G\bslash e$ and $G \setminus e$ denote $G$ with $e$ contracted and deleted, respectively. Given an ordering of $E(G) \setminus \set{e}$, we may consider $X(G\bslash e)$ and $X(G\setminus e)$ as subsets of $V(Q_{e(G)-1})$, and we let $Y(G, e) \subseteq E(Q_{e(G)-1})$ be the edges connecting an element of $X(G\bslash e)$ and an element of $X(G\setminus e)$. In other words, $Y(G, e)$ is the set of pairs $\set{S, S \cup \set{e'}}$ such that both $S \cup \set{e}$ and $S \cup \set{e'}$ determine spanning trees of $G$. The main result of this paper states that these constructions are $h$-degenerate, with a power-saving term in the Tur\'an number.
\begin{thm} \label{thm:mainv}
If $G$ is a connected series-parallel graph, there exists some $\eps > 0$ such that $\ex(Q_n, X(G)) = O(n^{-\eps} 2^n)$.
\end{thm}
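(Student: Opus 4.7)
The plan is to prove Theorem~\ref{thm:mainv} by induction on the series-parallel decomposition of $G$ relative to a choice of two terminal vertices $s, t$. To make the induction go through, I would strengthen the statement to track simultaneously a second pattern: for a two-terminal connected series-parallel graph $(G; s, t)$, let $P(G; s, t) \subseteq V(Q_{e(G)})$ be the collection of spanning $2$-forests of $G$ whose two components separate $s$ from $t$. The strengthened claim is that both $\ex(Q_n, X(G))$ and $\ex(Q_n, P(G; s, t))$ are $O(n^{-\eps} 2^n)$ for some $\eps > 0$ depending on $G$. The base case $G = K_2$ is trivial since $X(G)$ is a single vertex and $P(G; s, t) = \set{\emptyset}$, both found in any nonempty $S$.

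For the inductive step, every two-terminal SP graph with more than one edge decomposes as either a series composition $G = G_1 \cdot G_2$ (sharing a middle terminal) or a parallel composition $G = G_1 \|_{st} G_2$ (sharing both terminals). A short case analysis, tracking which of the terminals of $G_i$ lie in the same component of $T \cap E(G_i)$, produces the recursions
\[
  \begin{aligned}
    X(G_1 \cdot G_2) &= X(G_1) \times X(G_2), & P(G_1 \cdot G_2) &= X(G_1) \times P(G_2) \,\sqcup\, P(G_1) \times X(G_2), \\
    X(G_1 \|_{st} G_2) &= X(G_1) \times P(G_2) \,\sqcup\, P(G_1) \times X(G_2), & P(G_1 \|_{st} G_2) &= P(G_1) \times P(G_2),
  \end{aligned}
\]
where $\times$ denotes the Cartesian product on the two coordinate blocks and $\sqcup$ is the union inside $V(Q_{e(G_1)+e(G_2)})$. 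Notice the duality: parallel and series swap the roles of $X$ and $P$.

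The induction then reduces to two Tur\'an-type lemmas. The first, a \emph{product lemma}, asserts that if $\ex(Q_n, A) = O(n^{-\eps_1}2^n)$ and $\ex(Q_n, B) = O(n^{-\eps_2}2^n)$, then $\ex(Q_n, A \times B) = O(n^{-\eps_1 - \eps_2}2^n)$; this should follow by splitting the coordinates of $Q_n$ uniformly at random into two halves of roughly equal size and applying the inductive bounds on each half, together with an averaging argument. The second, a \emph{coupled union lemma}, handles the expression $(A \times B) \sqcup (C \times D)$ arising in the parallel case for $X$ and the series case for $P$. Here I would use supersaturation plus dependent random choice: the product lemma supplies many copies of $A \times B$ in $S$ over random coordinate splits, and I would argue that for a positive fraction of the shared coordinate data, the complementary pattern $C \times D$ can also be found consistently, producing the union.

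The main obstacle is the coupled union lemma, since one must find two different products simultaneously in $S$ respecting the same coordinate split. The challenge is to control the power-saving loss: each inductive step must degrade $\eps$ only by an additive constant, not by a factor of $\log n$ or worse, for the final bound to retain the form $O(n^{-\eps} 2^n)$. It is exactly the restricted combinatorial shape of the $X/P$-recursions --- specifically, a union of only two products that share one factor apiece --- that should make this possible, and this is where the series-parallel hypothesis is used in an essential way.
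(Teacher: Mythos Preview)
Your recursions for $X$ and $P$ under series and parallel composition are correct, and the high-level idea of inducting on the series-parallel structure is the right one. However, there is a genuine gap: the inductive hypothesis you carry --- only the extremal bound $\ex(Q_n,\cdot)=O(n^{-\eps}2^n)$ --- is too weak to push either lemma through. For the product lemma, a random coordinate split plus averaging tells you that many $y$-slices are dense and hence each contains \emph{some} embedded copy of $A$; but to embed $A\times B$ you need a \emph{single} embedding of $A$ that works across a dense set of $y$'s, and pigeonholing over the $\Theta(2^{n_1}\operatorname{poly}(n_1))$ possible embeddings of $A$ into $Q_{n_1}$ costs you an exponential factor. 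What you actually need in the inductive hypothesis is a supersaturation/counting statement (a positive proportion of embeddings land in $S$ once the density exceeds a power of $n^{-1}$), not merely the existence of one. Once you realize this, your ``coupled union lemma'' becomes the entire content of the theorem, and ``dependent random choice'' is not a substitute for an actual argument here.

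The paper proceeds along the same inductive skeleton but repairs both issues at once. First, it strengthens the hypothesis to $p$-degeneracy, a quantitative counting condition on a single layer $L_{a',b'}$: if $\abs{X'}=\delta\abs{L_{a',b'}}$ with $\delta\ge c_1\min(a',b')^{-c_2}$, then a $c_3\delta^{c_4}$-fraction of layer-embeddings of the pattern land in $X'$. Second, rather than composing two arbitrary series-parallel pieces, it uses the one-edge-at-a-time Dirac--Duffin decomposition: each step is a loop/leaf addition (trivial) or a single edge duplication/subdivision, which on the pattern side is duplication or coduplication of a \emph{single} coordinate. For that operation, preservation of $p$-degeneracy follows from a short Cauchy--Schwarz/convexity computation on pairs of embeddings that agree except in the placement of one symbol. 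This completely sidesteps your coupled union lemma: the ``union of two products'' you are trying to handle in one shot is, in the paper's setup, achieved by a sequence of single-coordinate steps, each of which squares the density exponent and is controlled by convexity. Passing from layers back to the whole cube is then a routine summation. If you want to salvage your version, replace the Tur\'an-type hypothesis by a counting hypothesis of this kind and note that your $P(G;s,t)$ is exactly $X(G')$ for $G'$ the (still series-parallel) graph obtained by identifying $s$ and $t$, so you are not in fact tracking anything beyond $X$.
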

\begin{thm} \label{thm:maine}
If $G$ is a connected series-parallel graph and $e \in E(G)$ is neither a bridge nor a loop, then there exists some $\eps > 0$ with $\ex(Q_n, Y(G, e)) = O(n^{-\eps} e(Q_n))$.
\end{thm}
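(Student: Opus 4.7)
The plan is to prove \cref{thm:maine} by induction on $e(G)$, exploiting the recursive series-parallel structure of $G$ alongside the vertex-side bound \cref{thm:mainv}.

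The base case $e(G) = 2$ is immediate: $G$ must consist of two parallel edges with $e$ one of them, so $Y(G,e)$ is a single edge of $Q_1$, and avoiding its image in $Q_n$ forces the empty edge set.

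For the inductive step, I would first reduce to the case that $G$ is $2$-connected: since spanning trees of $G$ are products of spanning trees of its blocks, edge-swaps involving $e$ live entirely in the block containing $e$, so the pattern $Y(G,e)$ is, up to a product with a hypercube factor coming from the other blocks, already determined by $Y(B,e)$ for $B$ the block of $e$. Once $G$ is $2$-connected with at least three edges, the standard structural dichotomy for $2$-connected series-parallel graphs yields either a pair of parallel edges or a vertex of degree $2$. Let $G'$ be the result of the corresponding parallel or series reduction; then $Y(G,e)$ should admit a recursive description as a ``bundle'' of copies of $Y(G',e')$ for a suitable distinguished edge $e'$ of $G'$, indexed by the possibilities for the reduced edge. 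Given a dense $Y(G,e)$-free edge set $F \subseteq E(Q_n)$, restriction to a suitably chosen sub-hypercube (obtained by fixing a coordinate associated with the reduced edge) should produce a dense $Y(G',e')$-free edge set in $Q_{n-1}$, to which the inductive hypothesis applies, yielding the desired power-saving bound.

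The main obstacle is the subcases where $e$ itself is part of the reduction---for instance, when $e$ lies in a parallel pair, or when $e$ is incident to a degree-$2$ vertex. In these situations the reduction turns $e$ into a loop or a bridge in $G'$, so $Y(G',e)$ ceases to be defined; I expect that here one must instead appeal to \cref{thm:mainv} applied to a vertex pattern $X(G'')$ arising from ``collapsing'' the $e$-end of the reduction, and then convert the resulting vertex Tur\'an bound into an edge one via a standard averaging over directions (roughly, by pigeonholing on the coordinate of $Q_n$ that plays the role of $e$). A secondary quantitative concern is that each inductive step shrinks the power-saving exponent $\epsilon$; to keep $\epsilon$ strictly positive at the end, one must ensure that the total depth of the SP-recursion is bounded in terms of $e(G)$ alone, e.g.\ by arranging each reduction step to strictly decrease $e(G)$.
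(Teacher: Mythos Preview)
There is a genuine gap in the inductive step. You correctly observe that $Y(G,e)$ is built from copies of $Y(G',e')$ (in the paper's language, $Y(G,e)$ is a \emph{duplication} or \emph{coduplication} of $Y(G',e')$), but the claimed deduction ``$F$ is $Y(G,e)$-free $\Rightarrow$ some sub-hypercube restriction of $F$ is $Y(G',e')$-free and still dense'' is false. An embedding of $Y(G,e)$ into $Q_n$ corresponds to a \emph{pair} of embeddings of $Y(G',e')$ into $Q_n$ that agree on all but one coordinate assignment (the one playing the role of the reduced edge). Hence $Y(G,e)$-freeness of $F$ only says that no two \emph{aligned} copies of $Y(G',e')$ coexist in $F$; a single copy of $Y(G',e')$ in any given sub-hypercube is entirely consistent with $F$ being $Y(G,e)$-free. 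Concretely, once the density of $F$ exceeds the Tur\'an threshold for $Y(G',e')$, copies of $Y(G',e')$ will appear in essentially every sub-hypercube restriction, yet $F$ may still avoid $Y(G,e)$. So no choice of ``suitably chosen sub-hypercube'' rescues the argument.

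What is missing is a counting step: one must show that a dense $F$ contains not just one copy of $Y(G',e')$ but \emph{many}, so many that by convexity two of them must be aligned. The paper achieves this by strengthening the inductive hypothesis from a Tur\'an bound to a supersaturation statement (\emph{$p$-degeneracy}): whenever the density $\delta$ exceeds a polynomially small threshold, at least a $\delta^{O(1)}$-fraction of all embeddings of $Y(G',e')$ land inside $F$. A Jensen/Cauchy--Schwarz argument over the equivalence classes of aligned embeddings then converts this into a lower bound on the number of aligned pairs, i.e.\ on embeddings of $Y(G,e)$, with the exponent doubling at each step. Your scheme of passing to $Q_{n-1}$ and invoking only the extremal bound for $Y(G',e')$ cannot supply this doubling, and the proposed appeal to \cref{thm:mainv} in the boundary subcases would run into the same obstruction.
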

In fact we are able to show something somewhat stronger; we defer the precise statements to \cref{thm:fullv,thm:fulle}. In \cref{sec:pj}, we also state a partial reformulation of \cref{thm:maine} that does not use the terminology of series-parallel graphs.

As previously mentioned, \cref{thm:mainv,thm:maine} lead to quick proofs of results of Alon \cite{Alon24}, Conlon \cite{Conlon10}, and Axenovich \cite{Axen23}, which we describe in \cref{sec:cor}. In \cref{sec:quant}, we use \cref{thm:mainv} is to improve lower bounds on the size of the largest $h$-degenerate subset of $Q_d$ and discuss the analogous problem for edges. In \cref{sec:small}, we show that in some small cases, \cref{thm:mainv,thm:maine} are complete, in the sense that every $h$-degenerate pattern is a subset of some $X(G)$ or some $Y(G, e)$. Whether this is true in general is unknown; for concreteness, in \cref{sec:small} we also identify what can be considered to be the simplest patterns whose $h$-degeneracy is unknown.

We conclude this introduction with a simple example. Suppose $G$ is $K_4$ minus an edge, with its edges labeled and ordered as follows:
\begin{center}
\begin{tikzpicture}[scale=0.9]
\node[vtx] at (1,0) {};
\node[vtx] at (-1,0) {};
\node[vtx] at (0,1) {};
\node[vtx] at (0,-1) {};
\draw (1,0)-- node[anchor=south west] {$e_2$}(0,1)-- node[anchor=south east] {$e_1$}(-1,0)-- node[anchor=south] {$e_5$}(1,0)-- node[anchor=north west] {$e_4$}(0,-1)--node[anchor=north east] {$e_3$}(-1,0);
\end{tikzpicture}
\end{center}
Then, we have
\begin{align*}
X(G) &= \set{01110,10110,11010,11100,01011,01101,10101,10011} \\
X(G \bslash e_5) &= \set{0101,0110,1001,1010} \\
X(G \setminus e_5) &= \set{0111,1011,1101,1110}.
\end{align*}
Thus $Y(G,e_5) \subseteq E(Q_4)$ is an $8$-cycle given by \[0101\mathord{-}0111\mathord{-}0110\mathord{-}1110\mathord{-}1010\mathord{-}1011\mathord{-}1001\mathord{-}1101\mathord{-}0101.\]
In particular, applying \cref{thm:maine} yields the fact that $C_8$ is $g$-degenerate.

\subsection*{Outline}
In \cref{sec:sp} we list necessary background results on the structure of series-parallel graphs. \cref{sec:main} proves \cref{thm:mainv,thm:maine}, while \cref{sec:add} contains various results related to understanding or applying \cref{thm:mainv,thm:maine}. We conclude with some final remarks in \cref{sec:conc}.

\section{Structural Graph Theory Preliminaries} \label{sec:sp}
In this section, we recall some results and terminology on the theory of series-parallel graphs. In this paper, the term \vocab{graph} on its own refers to a simple graph,\footnote{In general, the term ``graph'' is only used in contexts where it is obvious that multigraphs are not part of the discussion, such as when discussing $g$-degeneracy.} but the term \vocab{series-parallel graph} includes multigraphs.

Given two multigraphs $G_1$ and $G_2$ with distinguished vertices $v_1 \in V(G_1)$ and $v_2 \in V(G_2)$, the \vocab{$1$-sum} of $G_1$ and $G_2$ is the multigraph obtained by joining $G_1$ and $G_2$ by identifying $v_1$ and $v_2$. Given two multigraphs $G_1$ and $G_2$ with distinguished non-loop edges $e_1 \in E(G_1)$ and $e_2 \in E(G_2)$, the \vocab{$2$-sum} of $G_1$ and $G_2$ is the multigraph obtained by joining $G_1$ and $G_2$ by identifying $e_1$ and $e_2$ (under some orientation). Note that some definitions of the $2$-sum in the literature allow or require the distinguished edge of a $2$-sum to be deleted; this is not the case here.

A multigraph is \vocab{$2$-connected} if it has at least two edges, has no loops, and is such that deleting every vertex yields a connected graph. Any multigraph can be constructed from loops, single edges, and $2$-connected graphs by taking disjoint unions and $1$-sums. Moreover, these constituent graphs are recoverable from the final graph and partition its edges into \vocab{blocks}. For example, the blocks that are edges are the bridges of the graph.

Let $\cG$ be the set of connected series-parallel multigraphs. Let $\cG'$ be the set of connected series-parallel multigraphs with a single distinguished edge that is neither a bridge nor a loop. Furthermore, let $\cG''$ be the set of $(G, e) \in \cG'$ such that $G$ is $2$-connected. We will use the following operations on $\cG$, $\cG'$, $\cG''$:
\begin{itemize}
\item \textbf{Loop addition} ($\cG \to \cG$, $\cG' \to \cG'$): adding a loop to a vertex. This loop is not distinguished in the cases of $\cG'$ and $\cG''$.
\item \textbf{Leaf addition} ($\cG \to \cG$, $\cG' \to \cG'$): joining a new vertex to an existing vertex via an (undistinguished) edge.
\item \textbf{Edge duplication} ($\cG \to \cG$, $\cG' \to \cG'$, $\cG''\to \cG''$): duplicating an edge, that must be undistinguished in the cases of $\cG'$ and $\cG''$.
\item \textbf{Edge subdivision} ($\cG \to \cG$, $\cG' \to \cG'$, $\cG''\to \cG''$): replacing an edge, that must be undistinguished in the cases of $\cG'$ and $\cG''$, with a path of length $2$.
\item \textbf{Planar dual} ($\cG \to \cG$, $\cG' \to \cG'$, $\cG''\to \cG''$): embedding in the plane and taking the dual. As edges of the dual graph correspond to edges of the original graph, in the cases of $\cG'$ and $\cG''$ there is a canonical choice of which edge to distinguish.
\item \textbf{\boldmath $2$-sums} ($\cG' \times \cG' \to \cG'$, $\cG'' \times \cG'' \to \cG''$): taking the $2$-sum along the distinguished edges, with the final merged edge remaining distinguished.
\end{itemize}
The fact that the results of these operations lie in $\cG$, $\cG'$, or $\cG''$ is both well-known and easily checked.

We may now state a classical classification result for $\cG$. It is commonly attributed to Dirac \cite{Dirac52} and Duffin \cite{Duffin65}, even though the results of \cite{Dirac52,Duffin65} rely on a connectivity hypothesis that is absent here. The reader unsatisfied with this situation should note that the proof of \cref{lem:spb} can easily be adapted to prove \cref{lem:spa} as well.
\begin{lem}[Dirac \cite{Dirac52}, Duffin \cite{Duffin65}] \label{lem:spa}
Every connected series-parallel graph can be obtained from a single vertex vertex $K_1$ via loop addition, leaf addition, edge duplication, and edge subdivision.
\end{lem}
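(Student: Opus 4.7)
The plan is to induct on $|V(G)| + |E(G)|$, with base case $G = K_1$. For the inductive step with $|E(G)| \geq 1$, I would attempt to apply, in order, the inverse of each of the four operations: first remove a loop if one is present; otherwise delete a degree-$1$ vertex together with its incident (bridge) edge; otherwise delete one of a pair of parallel edges; and otherwise suppress a vertex of degree exactly $2$ whose two neighbors are distinct (replacing the length-$2$ path through it by a single edge). In each case the resulting multigraph $G'$ is a minor of $G$, so remains series-parallel; is straightforwardly connected; and has strictly smaller $|V| + |E|$. The induction hypothesis therefore yields a construction of $G'$, which we extend by the corresponding operation to recover $G$.

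It thus remains to verify that at least one of the four inverses is always applicable whenever $G \neq K_1$. If none is, then $G$ is simple (no loops, no parallel edges), has no leaves, and has no degree-$2$ vertex—since in a simple graph every degree-$2$ vertex automatically has two distinct neighbors. Consequently $\delta(G) \geq 3$.

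The main obstacle is to derive a contradiction from these conditions. I would invoke the classical structural theorem, essentially due to Dirac, that every simple graph of minimum degree at least $3$ contains a subdivision (hence a minor) of $K_4$; this contradicts the series-parallel hypothesis on $G$ and completes the induction. If one prefers not to cite this theorem as a black box, it admits a short direct proof: pass to a leaf block $B$ of $G$, observe via a brief case analysis that $B$ must have at least four vertices all but one of which have degree $\geq 3$ within $B$, and then use an ear decomposition of the $2$-connected graph $B$ together with the degree condition to exhibit three internally vertex-disjoint paths between two endpoints plus a fourth branching vertex. Modulo this classical structural ingredient, the entire argument is pure bookkeeping, which is why the paper is comfortable deferring it to the analogous---but more delicate---argument in \cref{lem:spb}.
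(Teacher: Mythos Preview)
Your argument is correct. The induction is set up properly, each inverse operation preserves connectivity and the series-parallel property (as a minor), and the endgame---a simple graph with $\delta(G)\ge 3$ must contain a $K_4$ subdivision---is exactly Dirac's classical theorem, so the contradiction is genuine. (Your optional ear-decomposition sketch for reproving that theorem is a bit loose, but since you correctly identify it as a citable black box this does not affect the validity of the proposal.)

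The paper, however, takes a different route. It does not prove \cref{lem:spa} directly but points to the proof of \cref{lem:spb}, which proceeds via the Robertson--Seymour characterization that series-parallel graphs have branch-width at most~$2$: in a minimal counterexample one takes a branch decomposition, finds two sibling leaves, and uses the width condition to locate a vertex of degree at most~$2$ incident only to those two edges. Your approach is the more classical one (essentially the original Dirac/Duffin argument) and has the virtue of being entirely elementary once Dirac's $TK_4$ theorem is granted. The paper's branch-width argument, by contrast, imports a heavier structural tool but is more uniform: it adapts cleanly to \cref{lem:spb}, where a distinguished edge must be protected from all four reductions---something your minimum-degree argument does not handle without extra work, since the distinguished edge could be the one you would otherwise un-duplicate or un-subdivide.
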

We will also need an analogous classification for $\cG'$. The following result is similar in spirit to \cite[Corollary 4]{Duffin65}, but as its precise statement does not have to previously appeared, we will give a proof.
\begin{lem} \label{lem:spb}
Let $(C_2, e) \in \cG'$ be the $2$-cycle with a distinguished edge, i.e.\ the multigraph with $2$ vertices and $2$ parallel edges joining them, where one is distinguished. Then every element of $\cG'$ can be obtained from $(C_2, e)$ via loop addition, leaf addition, edge duplication, and edge subdivision.
\end{lem}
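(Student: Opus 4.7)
The plan is to induct on $e(G)$. For $e(G) = 2$, the requirement that $e^*$ is neither a bridge nor a loop puts it on a cycle of length at most $2$, forcing $G = C_2$. For the inductive step, I would exhibit an inverse operation producing a smaller member of $\cG'$.

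First strip the easy features: any loop of $G$ is distinct from $e^*$ and can be removed, and any degree-$1$ vertex's incident edge is a bridge (hence distinct from $e^*$) so the leaf can be removed. So we may assume $G$ is loopless with minimum degree at least $2$. Let $B$ denote the $2$-connected block of $G$ containing $e^*$. If $G \neq B$, pick any leaf block $B' \neq B$ of the block-cut tree, attached at a cut vertex $c$; by the minimum degree assumption, $B'$ cannot be a bridge and is itself $2$-connected. Otherwise $G = B$ is itself $2$-connected. In both cases the required inverse operation comes from an auxiliary avoidance claim: in any $2$-connected series-parallel multigraph $H$ with $e(H) \geq 3$ distinguishing either a vertex $c$ or an edge $e^*$, there exists either a pair of parallel edges (both distinct from $e^*$ in the edge case) or a degree-$2$ vertex distinct from $c$ (respectively, not an endpoint of $e^*$). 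A degree-$2$ vertex of $B'$ other than $c$ is not a cut vertex of $G$, so its two edges lie in $B'$ and unsubdivision lifts to $G$; similarly the other reducers lift to valid inverse operations.

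I would prove the auxiliary claim by induction on $e(H)$. Parallel edges satisfying the constraint resolve it immediately, including the case of three or more parallels to $e^*$ which leaves at least two disjoint from $e^*$. Otherwise either $H$ is simple, or $H$ has exactly one partner $f$ of $e^*$; passing to the simple $2$-connected SP graph $H$ or $H \setminus f$, which has degeneracy $2$ and minimum degree at least $2$, we find a degree-$2$ vertex. If this vertex avoids the constraint we are done. The only delicate subcase (in the edge form) is when every degree-$2$ vertex of this simple graph lies in $\{u, v\}$, the endpoints of $e^*$.

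The main obstacle is this final subcase. Writing $a, b$ for the other neighbors of $u, v$, a $2$-connectivity argument gives $a \neq b$ (else $a$ would be a cut vertex separating $\{u, v\}$ from the rest). If $ab \notin E(H)$, then the minor obtained from $H$ by contracting $e^*$ and then unsubdividing the resulting degree-$2$ vertex is a simple $2$-connected SP graph with minimum degree at least $3$, contradicting SP-degeneracy. If $ab \in E(H)$, we instead contract both $ua$ and $vb$, obtaining a $2$-connected SP minor $H^\circ$ with $e(H) - 2$ edges in which $e^*$ has a unique parallel partner (the image of $ab$); provided $e(H^\circ) \geq 3$ (the case $e(H^\circ) = 2$ forces $H = C_4$, which contradicts the subcase since then $a, b$ would also have degree $2$), the inductive hypothesis applied to $H^\circ$ yields a degree-$2$ vertex outside the endpoints of $e^*$, which pulls back unchanged to a degree-$2$ vertex of $H$ outside $\{u, v\}$, contradicting the subcase. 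The vertex form of the claim is handled by an analogous but simpler argument. This closes both inductions.
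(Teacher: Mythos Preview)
Your argument is correct but follows a different route from the paper. The paper's proof is a short minimal-counterexample argument via branch-width: since series-parallel graphs have branch-width at most $2$, there is a branch-decomposition tree which, when rooted at the leaf for $e$, contains two non-root sibling leaves; the two corresponding edges of $G$ are separated from the rest of $G$ (including $e$) by at most two vertices, immediately yielding either a pair of parallel undistinguished edges or a vertex of degree at most $2$ incident only to undistinguished edges. You instead strip loops and leaves, reduce to a $2$-connected block, and locate a degree-$2$ vertex via the $2$-degeneracy of $K_4$-minor-free graphs, handling by a contraction argument the residual case in which every degree-$2$ vertex is an endpoint of $e^*$. This avoids branch-width entirely but is noticeably longer. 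Two remarks on your ``delicate subcase'': first, the passage from ``every degree-$2$ vertex lies in $\{u,v\}$'' to ``$u$ and $v$ both have degree $2$'' tacitly uses that a simple $2$-connected $K_4$-minor-free graph on at least three vertices has at least two degree-$2$ vertices (complete to a $2$-tree and use that it has at least two simplicial vertices); second, granting this, the subcase is in fact vacuous --- in that $2$-tree completion $T$, both $u$ and $v$ would be simplicial, so $N_T(u)=\{v,a\}$ with $va\in E(T)$ while $N_T(v)=\{u,b\}$, forcing $a=b$ and then making $a$ a cut vertex --- so the contraction analysis you flag as the main obstacle, though valid, is unnecessary.
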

\begin{proof}
Suppose not and consider a counterexample $(G, e)$ with the minimum number of edges. Since $G$ is series-parallel, it is known \cite{RS91} that it has \emph{branch-width} at most $2$, i.e.\ there exists an unrooted binary tree $T$, with its leaves corresponding to the edges of $G$, such that for every edge $e'$ of $T$ there exist at most $2$ vertices of $G$ that have incident edges corresponding to leaves on both sides of $e'$. Root $T$ at the leaf corresponding to $e$. The only element of $\cG'$ with at most $2$ edges is $C_2$, so $e(G) \geq 3$. Thus, there must exist two (non-root) leaves of $T$ that have a common parent. Suppose these leaves correspond to edges $st$ and $uv$ in $G$.

If $G$ has loops, removing them contradicts minimality, so $s \neq t$ and $u \neq v$. If $\set{s,t} = \set{u,v}$, then we have two parallel undistinguished edges, again a contradiction. Therefore $\abs{\set{s,t,u,v}} \geq 3$. By the branch-width condition, at most two elements of $\set{s,t,u,v}$ are incident to an edge in the rest of $G$, so take some $w \in \set{s,t,u,v}$ that is not. Then $w$ is only adjacent to undistinguished edges and has degree either $1$ or $2$. Either case yields a contradiction.
\end{proof}
Finally, we prove two classification results for $\cG''$.
\begin{lem} \label{lem:spc}
Every element of $\cG''$ can be obtained from $C_2$ via edge duplication and edge subdivision.
\end{lem}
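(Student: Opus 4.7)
The plan is to mimic the proof of \cref{lem:spb} by choosing a minimum counterexample $(G, e) \in \cG''$ and applying a branch-decomposition argument. The main new feature is that in the $2$-connected setting, loop addition and leaf addition are forbidden, so I must show every reduction step produces either a pair of parallel undistinguished edges or a degree-$2$ vertex whose two incident edges are both undistinguished.

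Concretely, I would fix a trivalent branch-decomposition $T$ of $G$ of width at most $2$ (invoking \cite{RS91} as before), root it at the leaf corresponding to $e$, and, using the same cherry-counting argument that underlies \cref{lem:spb}, pick two non-root sibling leaves corresponding to edges $f_1 = s_1 t_1, f_2 = s_2 t_2 \in E(G) \setminus \set{e}$. The branch-width condition then implies at most two vertices of $\set{s_1, t_1, s_2, t_2}$ are incident to further edges of $G$, and $2$-connectivity rules out loops so $s_i \neq t_i$.

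The argument then splits into two cases. If $\set{s_1, t_1} = \set{s_2, t_2}$, then $f_1, f_2$ are parallel undistinguished edges, and I form $G' = G \setminus f_1$. Otherwise $\abs{\set{s_1, t_1, s_2, t_2}} \geq 3$ and some $w \in \set{s_1, t_1, s_2, t_2}$ has all its incidences in $\set{f_1, f_2}$; $2$-connectivity forces $\deg_G(w) \geq 2$, so $w$ is actually incident to both $f_1$ and $f_2$ with $\deg_G(w) = 2$, and I let $G'$ be the result of smoothing $w$ (replacing $f_1, f_2$ by a single new undistinguished edge $ab$ where $a,b$ are the other endpoints). In either case $(G', e)$ is smaller; by minimality it is built from $C_2$ via edge duplication and subdivision, and an additional edge duplication or subdivision recovers $G$, contradicting the counterexample assumption.

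The main obstacle will be verifying that $(G', e)$ remains in $\cG''$. Two-connectivity is preserved by both reductions (deleting one of a pair of parallel edges, or smoothing a degree-$2$ vertex), but I must also confirm that $e$ is still not a bridge: in the parallel case, any cycle through $e$ and $f_1$ in $G$ reroutes through $f_2$ in $G'$, while in the smoothing case, any cycle through $e$ that passes through $w$ must use both $f_1$ and $f_2$ and hence becomes a cycle through the new edge $ab$.
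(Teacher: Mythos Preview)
Your argument is correct, but it takes a substantially longer route than the paper's. The paper's proof is a one-liner: it invokes \cref{lem:spb} to write any $(G,e)\in\cG''$ as the result of a sequence of loop additions, leaf additions, edge duplications, and edge subdivisions applied to $C_2$, and then observes that adding a loop or a leaf creates a new block (hence a cut vertex), and that edge duplication and subdivision act within blocks and so can never remove that cut vertex. Since the final graph is $2$-connected, no loop or leaf addition can ever have occurred in the sequence. You instead rerun the entire branch-decomposition minimum-counterexample argument from scratch, tailored to the $2$-connected case so that the reduction always lands on a parallel pair or a smoothable degree-$2$ vertex.

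Both approaches work. The paper's buys brevity and reuses \cref{lem:spb} wholesale; yours is self-contained and avoids the ``cut vertices persist under duplication/subdivision'' observation, at the cost of re-verifying that each reduction keeps $(G',e)$ in $\cG''$. One small redundancy in your write-up: once you have established that $G'$ is $2$-connected with at least two edges, the fact that $e$ is not a bridge is automatic, so the cycle-rerouting paragraph at the end is unnecessary.
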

\begin{proof}
This follows from \cref{lem:spb} and the observation that if a loop or leaf is ever added, a cut vertex is introduced in the graph, which can never be removed.
\end{proof}

\begin{lem} \label{lem:spd}
Every element of $\cG''$ can be obtained from copies of $C_2$ via duals and $2$-sums.
\end{lem}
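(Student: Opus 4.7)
The plan is to argue by strong induction on $e(G)$. The base case $e(G) = 2$ forces $G = C_2$ and is immediate. For $e(G) \geq 3$, I would exhibit $(G, e)$ as a $2$-sum of smaller $\cG''$-elements, possibly after a planar dual.

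I would consider four reduction mechanisms. (I) If $e$ has a parallel edge $e' \in E(G)$, then $(G - e', e) \in \cG''$ has one fewer edge, and $(G, e)$ is the $2$-sum of $(G - e', e)$ with $(C_2, f)$ along $e$, where $f$ is either edge of $C_2$. (II) If an endpoint of $e$ has degree $2$ in $G$, then $e^*$ has a parallel edge in $G^*$, so (I) applies to $(G^*, e^*)$; we derive $(G^*, e^*)$ via the inductive hypothesis and take one more dual. (III) If the endpoints $u, v$ of $e$ form a $2$-vertex cut with $G - \{u, v\}$ having at least two components, then $(G, e)$ splits as a $2$-sum along $e$ into two smaller $\cG''$-elements (both pieces contain $e$, and both are $2$-connected by standard $2$-separation arguments since $uv = e$ is an edge of $G$). (III$^*$) If the dual analogue of (III) holds for $(G^*, e^*)$, we derive the dual and take one final dual. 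In each case, the inductive hypothesis handles the pieces.

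The main obstacle is the structural claim that one of (I)--(III$^*$) always holds. I would view $G$ as a $2$-terminal series-parallel multigraph with terminals $u, v$. Failure of (I) forces a top-level parallel decomposition $G = \{e\} \parallel G'$ with $G'$ containing no $u$-$v$ edge; failure of (II) ensures both terminals have degree $\geq 2$ in $G'$; and failure of (III) rules out a parallel top-level decomposition of $G'$ (which would disconnect $G - \{u, v\}$ into multiple components), so $G'$ must admit a top-level series composition $H_1 \cdot H_2$ at some middle vertex $w$, with each $H_i$ containing at least $2$ edges. In any planar embedding of $G$, each face bounded partly by an $H_1$-edge lies entirely in $H_1$'s region of the embedding or is one of the two $e$-adjacent faces (and similarly for $H_2$), since $H_1$ and $H_2$ share only $w$. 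Each $H_i$, being $2$-connected with $\geq 2$ edges, contributes at least one face whose boundary uses only $H_i$-edges; these $H_1$-interior and $H_2$-interior face-vertices of $G^*$ become disconnected upon removing the endpoints of $e^*$, establishing (III$^*$).
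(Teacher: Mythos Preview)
Your inductive scheme is different from the paper's, and worth noting: the paper invokes Lemma~\ref{lem:spc} to obtain a duplication/subdivision sequence from $(C_2,e)$ and splits on whether the \emph{first} operation is a duplication (giving a $2$-sum decomposition directly) or a subdivision (passing to the dual and reducing to the previous case). You instead perform a structural case analysis on the neighbourhood of $e$ and appeal to the $2$-terminal series--parallel tree of $G$. Both are reasonable routes.

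However, there is a genuine gap in your final step. The assertion ``each $H_i$, being $2$-connected with $\geq 2$ edges'' is false: the pieces $H_1, H_2$ of the series decomposition of $G'$ need not be $2$-connected. For instance, take $H_1$ to consist of two parallel edges between $u$ and some vertex $a$, together with a single edge $aw$; then $a$ is a cut vertex of $H_1$ (and $aw$ is a bridge). One can complete this to a full $(G,e)\in\cG''$ with (I), (II), (III) all failing by taking $H_2$ symmetric to $H_1$ on the $v$-side.

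What you actually have is weaker but sufficient: from failure of (II) you know the terminal $u$ has degree $\geq 2$ in $H_1$, and since a two-terminal series--parallel graph that is a tree must be a path (only series compositions are available), $H_1$ cannot be a tree and therefore contains a cycle. To finish you must also argue that some cycle of $H_1$ bounds a face of $G$ (not merely of $H_1$ in isolation). This follows because $G\setminus V(H_1)=H_2-w$ is connected (a consequence of $G$ being $2$-connected, since any component of $H_2-w$ not containing $v$ would be separated from the rest of $G$ by $w$ alone), so all of $H_2\cup\{e\}$ lies in a single face of the embedded $H_1$, leaving the remaining $e(H_1)-v(H_1)+1\geq 1$ faces of $H_1$ as $H_1$-interior faces of $G$. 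With this repair your argument for (III$^*$) goes through; as written, the $2$-connectivity claim is the missing idea.
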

\begin{proof}
Take some $(G, e)\in \cG''$; we will apply induction on the number of edges in $G$. If $e(G) \leq 2$ then we must have $G = C_2$. Otherwise, by \cref{lem:spc} we may obtain $(G, e)$ from $C_2$ via edge duplication and subdivision, which at least one operation being performed. If the first operation to be performed is edge duplication, we claim that $(G, e)$ can be written as a $2$-sum of some $(G_1,e_1), (G_2, e_2) \in \cG''$, which will finish by the inductive hypothesis as $e(G_1), e(G_2) < e(G)$. To do this, suppose the first edge duplication produces edges $e'$ and $e''$. Then, we may let $(G_1, e_1)$ be $(G, e)$ but with all edges originating from $e''$ deleted, and similarly let $(G_2, e_2)$ be $(G, e)$ but with all edges originating from $e'$ deleted.

If the first operation to be performed is edge subdivision, observe that there exists a dual graph $(G^*, e^*) \in \cG''$ that can be constructed in the same way as $(G, e)$, but with every instance of edge duplication replaced with edge subdivision, and vice versa. Since $e(G) = e(G^*)$, we conclude that $(G^*, e^*)$ falls into the case discussed above and can thus be obtained from copies of $C_2$ via duals and $2$-sums. We can then obtain $(G, e)$ as the dual of $(G^*, e^*)$.
\end{proof}

\section{Proof of Main Theorems} \label{sec:main}
To prove \cref{thm:mainv,thm:maine}, we first move to a more convenient notion of containment, which only focuses on a single layer of a hypercube. This leads to the layer-focused notion of \vocab{$\ell$-degeneracy}. After strengthening $\ell$-degeneracy to a quantitative condition that we call \vocab{$p$-degeneracy}, we define two operations, called duplication and coduplication, which are shown to turn $p$-degenerate patterns into larger $p$-degenerate patterns using a Sidorenko-like mirroring argument. The proof concludes by showing that duplication and coduplication perfectly correspond to the operations of \cref{lem:spa} and \cref{lem:spb}.

\subsection{Layers of the hypercube}
For the remainder of this paper, identify $V(Q_n)$ with $\set{0,1}^n$, the set of binary strings of length $n$. Moreover, we identify $E(Q_n)$ with the set of strings of length $n$ on the alphabet $\set{0,1,\astk}$ that contain exactly one $\astk$, corresponding to the direction of the edge.
\begin{defn}
If $a$ and $b$ are nonnegative integers, let $L_{a,b} \subseteq V(Q_{a+b})$ and $L'_{a,b} \subseteq E(Q_{a+b+1})$ be the vertices or edges, respectively, corresponding to strings with $a$ zeroes and $b$ ones.
\end{defn}
We now define a class of canonical maps between layers of the hypercube.
\begin{defn}
Suppose $a\leq a'$ and $b \leq b'$ are nonnegative integers. Let $P_{a,b,a',b'}$ be the set of strings (of length $a' + b'$) that are permutations of $s_1s_2\cdots s_{a+b} 0^{a'-a} 1^{b'-b}$, where exponents denote repetition. For $p = p_1p_2 \cdots p_{a'+b'} \in P_{a,b,a',b'}$ define $\psi_p \colon L_{a,b} \to L_{a',b'}$ to be ``the function described by $p$'', i.e.
\[\psi_p(s_1s_2 \cdots s_{a+b})_i = \begin{cases}
s_j & p_i = s_j \\
0 & p_i = 0 \\
1 & p_i = 1.
\end{cases}\]
Let $P'_{a,b,a',b'}$ be the set of strings (of length $a'+b'+1$) that are permutations of $s_1s_2 \cdots s_{a+b+1} 0^{a'-a} 1^{b'-b}$, and for $p \in P'_{a,b,a',b'}$ define $\psi_p\colon L'_{a,b} \to L'_{a',b'}$ similarly.
\end{defn}
Observe that
\[\abs{P_{a,b,a',b'}} = \frac{(a'+b')!}{(a'-a)!(b'-b)!} \quad\text{and}\quad \abs{P'_{a,b,a',b'}} = \frac{(a'+b'+1)!}{(a'-a)!(b'-b)!}.\]
\begin{defn}
Let $a \leq a'$ and $b\leq b'$ be nonnegative integers. For $X \subseteq L_{a,b}$, let $\ex(L_{a',b'}, X)$ be the size of the largest subset $S$ of $L_{a',b'}$ such that $\psi_p(X) \nsubseteq S$ for any $p \in P_{a,b,a',b'}$. Call $X$ \vocab{$\ell$-degenerate} if there is a function $f$ with $\lim_{x \to\infty} f(x) = 0$ such that $\ex(L_{a',b'}, X) \leq f(\min(a',b')) \abs{L_{a',b'}}$. For $Y \subseteq L'_{a,b}$, define $\ex(L'_{a',b'}, Y)$ and $\ell$-degeneracy similarly.
\end{defn}
While the notion of $\ell$-degeneracy will be helpful for later discussion, for the proof of \cref{thm:mainv,thm:maine} we will need a more quantitative version.
\begin{defn}
Let $a \leq a'$ and $b\leq b'$ be nonnegative integers. If $X \subseteq L_{a,b}$ and $X'\subseteq L_{a',b'}$, let
\[t(X, X') = \frac{\abs{\setmid{p \in P_{a,b,a',b'}}{\psi_p(X) \subseteq X'}}}{\abs{P_{a,b,a',b'}}}.\]
Similarly, if $Y \subseteq L'_{a,b}$ and $Y'\subseteq L'_{a',b'}$, let
\[t(Y,Y') = \frac{\abs{\setmid{p \in P'_{a,b,a',b'}}{\psi_p(Y) \subseteq Y'}}}{\abs{P'_{a,b,a',b'}}}.\]
\end{defn}

\begin{defn} \label{def:pdegen}
Call $X \subseteq L_{a,b}$ \vocab{$p$-degenerate} if there exist constants $c_1,c_2,c_3,c_4 > 0$ such that if $a' \geq a$, $b' \geq b$, and $X' \subseteq L_{a',b'}$ with $\abs{X'} = \delta \abs{L_{a',b'}}$ satisfies $\delta \geq c_1\min(a',b')^{-c_2}$, then $t(X, X') \geq c_3\delta^{c_4}$. Define $p$-degeneracy for subsets $Y \subseteq L'_{a,b}$ similarly.
\end{defn}
Note that $p$-degeneracy implies $\ell$-degeneracy with $f(x) = c_1 x^{-c_2}$.

\subsection{Duplication and coduplication}
We now describe a procedure to construct $p$-degenerate sets, using two operations $D_i$ and $D'_i$, called \vocab{duplication} and \vocab{coduplication}, respectively.
\begin{defn}
Suppose $(a,b) \neq (0,0)$ and $X\subseteq L_{a,b}$. Define
\[D_{a+b}(X) = \setmid{s00}{s0 \in X} \cup \setmid{s01}{s1 \in X} \cup \setmid{s10}{s1 \in X} \subseteq L_{a+1,b}.\]
and
\[D'_{a+b}(X) = \setmid{s01}{s0 \in X} \cup \setmid{s10}{s0 \in X} \cup \setmid{s11}{s1 \in X} \subseteq L_{a,b+1}.\]
For $i \in [a+b]$, define $D_i X$ and $D'_i X$ similarly but distinguishing the $i$th coordinate instead of the $(a+b)$-th.
\end{defn}
\begin{defn}
Suppose $Y\subseteq L'_{a,b}$. Define
\begin{multline*}
D_{a+b+1}(Y) = \setmid{s00}{s0 \in Y} \cup \setmid{s01}{s1 \in Y} \cup \setmid{s01}{s1 \in Y} \\ \cup \setmid{s0\astk}{s\astk \in Y} \cup \setmid{s\astk 0}{s\astk  \in Y} \subseteq L'_{a+1,b}.
\end{multline*}
and
\begin{multline*}
D'_{a+b+1}(Y) = \setmid{s01}{s0 \in Y} \cup \setmid{s10}{s0 \in Y} \cup \setmid{s11}{s1 \in Y} \\ \cup \setmid{s1\astk}{s\astk \in Y} \cup \setmid{s\astk s}{s\astk \in Y} \subseteq L'_{a,b+1}.
\end{multline*}
For $i \in [a+b+1]$, define $D_i Y$ and $D'_i Y$ similarly but distinguishing the $i$th coordinate instead of the $(a+b+1)$-th.
\end{defn}

\begin{lem} \label{lem:dupdegen}
Duplication and coduplication preserve $p$-degeneracy.
\end{lem}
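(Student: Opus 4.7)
The strategy is a Sidorenko-style mirroring argument followed by Cauchy--Schwarz, and I will describe it in detail for vertex duplication $D_{a+b}$; the other three cases ($D'_{a+b}$, $D_{a+b+1}$, $D'_{a+b+1}$) reduce to symmetric variants. Let $X \subseteq L_{a,b}$ be $p$-degenerate and let $X' \subseteq L_{a', b'}$ have density $\delta$. Given an embedding $p \in P_{a+1, b, a', b'}$, let $i, j$ denote the positions carrying $s_{a+b}$ and $s_{a+b+1}$. Define two auxiliary embeddings $q_1, q_2 \in P_{a, b, a', b'}$: the embedding $q_1$ agrees with $p$ outside $\{i,j\}$ and puts $s_{a+b}$ at $i$ with a fixed $0$ at $j$, while $q_2$ is defined symmetrically with $i$ and $j$ swapped. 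Unpacking the definition of $D_{a+b}$ shows that $\psi_p(D_{a+b}X) \subseteq X'$ if and only if both $\psi_{q_1}(X) \subseteq X'$ and $\psi_{q_2}(X) \subseteq X'$, since the three branches of $D_{a+b}$ exactly encode the combinations $(0,0)$, $(0,1)$, $(1,0)$ seen at positions $(i,j)$ by $q_1$ and $q_2$.

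Next, I parametrize this bijection through what I will call \emph{skeletons}. A skeleton $R$ places $s_1, \ldots, s_{a+b-1}$ and the $b'-b$ fixed $1$'s, and additionally marks a set of $a'-a+1$ ``reserved'' positions. Each reserved position $c$ determines an embedding $q_R^c \in P_{a,b,a',b'}$ by placing $s_{a+b}$ at $c$ and fixed $0$'s at the other reserved positions; set $N(R) := |\{c : \psi_{q_R^c}(X) \subseteq X'\}|$, and write $M := |\{q : \psi_q(X) \subseteq X'\}|$. Each $q$ arises from a unique pair $(R, c)$, so $\sum_R N(R) = M$, while each $p$ arises from a unique triple $(R;c_1,c_2)$ with $c_1 \neq c_2$, so
\[|\{p : \psi_p(D_{a+b}X) \subseteq X'\}| \,=\, \sum_R N(R)(N(R) - 1) \,\geq\, \frac{M^2}{|R|} - M\]
by Cauchy--Schwarz applied to $N(R)$ against the constant $1$.

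To close, invoke $p$-degeneracy of $X$ with constants $c_1, c_2, c_3, c_4$: for $\delta \geq c_1 \min(a', b')^{-c_2}$ we have $M \geq c_3 \delta^{c_4} |R|(a'-a+1)$. A mild strengthening of the lower bound on $\delta$, absorbable into new constants $c_1', c_2'$, additionally ensures $M \geq 2|R|$, and then the displayed count is at least $M^2/(2|R|)$. Dividing by $|P_{a+1,b,a',b'}| = |R|(a'-a+1)(a'-a)$ yields $t(D_{a+b}X, X') \geq (c_3^2/2)\delta^{2c_4}$, establishing $p$-degeneracy of $D_{a+b}X$ with constants $(c_1', c_2', c_3^2/2, 2c_4)$. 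The case $D'_{a+b}$ is identical with the roles of $0$ and $1$ exchanged; for the edge cases the same mirroring works, with the $\astk$-symbol propagating as a passive free coordinate so that the extra ``$s\astk$''-branches of $D_{a+b+1}$ and $D'_{a+b+1}$ fit seamlessly into the bijection.

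The main subtlety is justifying the jump from $M^2/|R| - M$ to $M^2/(2|R|)$. This reduces to checking that $M \geq 2|R|$ holds whenever $\delta$ exceeds a threshold of the form $c_1'' \min(a', b')^{-1/c_4}$, which can indeed be combined with the original $p$-degeneracy threshold by taking the exponent $c_2' := \min(c_2, 1/c_4)$ and $c_1'$ large enough to dominate any bounded-dimension regime.
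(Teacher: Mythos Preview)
Your proof is correct and follows essentially the same route as the paper's. Your ``skeletons'' are exactly the paper's equivalence classes under its relation $\sim$ on $P_{a,b,a',b'}$, your $N(R)$ is the paper's $n_T$, and your Cauchy--Schwarz bound $\sum_R N(R)(N(R)-1) \geq M^2/|R| - M$ is the same inequality the paper obtains from the convexity of $n_T \mapsto n_T(n_T-1)$; the resulting constants $(c_1',c_2',c_3^2/2,2c_4)$ with $c_2' = \min(c_2,1/c_4)$ match as well.
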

\begin{proof}
We will only consider the vertex case, as the edge case is almost identical. Choose some $p$-degenerate $X\subseteq L_{a,b}$. Without loss of generality, it suffices to show that $D_{a+b} X$ is $p$-degenerate, since the argument for $D_i X$ is the same but with permuted coordinates, and the argument for $D'_i X$ is the same but with permuted coordinates and $0$s and $1$s swapped.

Fix some $a' \geq a+1$ and $b' \geq b$. For $p, p' \in P_{a,b,a',b'}$, write $p \sim p'$ if they agree in the locations of the $1$s and $s_1,s_2,\ldots,s_{a+b-1}$. If $p \neq p'$ and $p \sim p'$, one can construct $q \in P_{a+1,b,a',b'}$ which is the same as $p$ but with an $s_{a+b+1}$ replacing the $0$ in the location where $p'$ has an $s_{a+b}$. One can check that in this case, $\psi_q(D_{a+b} X) = \psi_p(X) \cup \psi_{p'}(X)$, so for any $X' \subseteq L_{a',b'}$, we have $\psi_q(D_{a+b} X) \subseteq X'$ if and only if $\psi_p(X) \subseteq X'$ and $\psi_{p'}(X) \subseteq X'$.

Since the map $(p, p') \mapsto q$ is reversible, we conclude that for any $X' \subseteq L_{a',b'}$, we have
\[t(D_{a+b} X, X') = \frac{1}{\abs{P_{a+1,b,a',b'}}} \sum_{T \in P_{a,b,a',b'}/\mathord{\sim}} n_T(n_T - 1),\]
where $n_T = \abs{\setmid{p \in T}{\psi_p(X) \subseteq X'}}$. By definition, we also have
\[t(X, X') = \frac{1}{\abs{P_{a,b,a',b'}}} \sum_{T \in P_{a,b,a',b'}/\mathord{\sim}} n_T.\]
Since the equivalence classes of $\sim$ have size $a'-a+1$, if we let $\beta = t(X, X')$ we have
\[\frac{1}{\abs{P_{a,b,a',b'}/\mathord{\sim}}} \sum_{T \in P_{a,b,a',b'}/\mathord{\sim}} n_T = (a'-a+1)\beta.\]
Thus, by the convexity of the function $n_T \mapsto n_T(n_T - 1)$, we find that (assuming $\beta > 0$)
\begin{align*}
t(D_{a+b} X, X') &\geq \frac{\abs{P_{a,b,a',b'}/\mathord{\sim}}}{\abs{P_{a+1,b,a',b'}}} (a'-a+1)\beta ((a'-a+1)\beta - 1) \\
&= \frac{(a'-a+1)\beta((a'-a+1)\beta - 1)}{(a'-a+1)(a'-a)} \\
&= \beta^2 \paren*{1-\frac{1/\beta-1}{a'-a}}.
\end{align*}

Now let $c_1,c_2,c_3,c_4,\delta$ be as in \cref{def:pdegen}. We know that if $\delta \geq c_1 \min(a',b')^{-c_2}$, then $\beta \geq c_3 \delta^{c_4}$. If it is further the case that 
\[a' \geq (a + 2/c_3) \delta^{-c_4} \iff \delta \geq (a+2/c_3)^{1/c_4} (a')^{-1/c_4},\] then $a' - a \geq 2/(c_3\delta^{c_4})$, so $(1/\beta - 1)/(a'-a) \leq 1/2$. This implies that $t(D_{a+b} X, X') \geq c_3^2/2 \cdot \delta^{2c_4}$. Therefore, if we let $c'_1 = \max(c_1, (a + 2/c_3)^{1/c_4})$, $c'_2 = \min(c_2, 1/c_4)$,  $c'_3 = c_3^2/2$, and $c'_4 = 2c_4$, we find that if $\delta \geq c'_1 \min(a',b')^{-c'_2}$, we have $t(D_{a+b} X, X') \geq c'_3 \delta^{c'_4}$, as desired.
\end{proof}
We now relate the graph operations of \cref{lem:spa,lem:spb} to duplication and coduplication.
\begin{thm} \label{thm:core}
If $G \in \cG$, then $X(G) \subseteq L_{e(G)-v(G)+1,v(G)-1}$ is $p$-degenerate.
If $(G, e) \in \cG'$, then $Y(G, e) \subseteq L'_{e(G)-v(G),v(G)-2}$ is $p$-degenerate.
\end{thm}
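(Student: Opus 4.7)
The plan is to proceed by induction on the construction of $G$ (or $(G, e)$) guaranteed by \cref{lem:spa} and \cref{lem:spb}. The base cases are $X(K_1) \subseteq L_{0,0}$ (the singleton containing the empty string) and $Y(C_2, e) \subseteq L'_{0,0}$ (the singleton $\set{\astk}$), both of which are trivially $p$-degenerate. The inductive step will require showing that each of the four generating operations on graphs transforms $X(G)$ and $Y(G, e)$ into new sets that remain $p$-degenerate.

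I would establish the following correspondences by direct spanning-tree analysis. Loop addition appends a forced $0$ to every string of $X(G)$ or $Y(G, e)$, since loops never appear in spanning trees of $G$, $G \bslash e$, or $G \setminus e$. Leaf addition appends a forced $1$, since a leaf edge is a bridge and must lie in every spanning tree. Edge duplication at position $k$ yields $X(G') = D_k(X(G))$ and $Y(G', e) = D_k(Y(G, e))$, corresponding to the three possibilities $\set{00, 01, 10}$ for the duplicated pair of parallel edges in a spanning tree (it cannot be $11$ since the two edges form a cycle). Edge subdivision at position $k$ yields $X(G') = D'_k(X(G))$ and $Y(G', e) = D'_k(Y(G, e))$, corresponding to the three possibilities $\set{01, 10, 11}$ for the pair of subdivided edges (the case $00$ being excluded because the newly created degree-two vertex must be connected).

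The operations $D_k, D'_k$ preserve $p$-degeneracy by \cref{lem:dupdegen}, and padding by a fixed $0$ or $1$ preserves $p$-degeneracy via a bijection argument: if $\tilde X = X \times \set{0} \subseteq L_{a+1, b}$ comes from $X \subseteq L_{a, b}$, then there is an $(a'-a)$-to-$1$ map $P_{a+1,b,a',b'} \to P_{a,b,a',b'}$ obtained by relabeling $s_{a+b+1}$ as a generic $0$, under which $\psi_p(\tilde X) = \psi_{\pi(p)}(X)$. Hence $t(\tilde X, X') = t(X, X')$ exactly, which clearly preserves $p$-degeneracy. Combining the inductive step with \cref{lem:spa} and \cref{lem:spb} completes the proof.

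The main obstacle will be verifying the identifications for $Y(G, e)$ under duplication and subdivision, where one must track how the $\astk$ symbol behaves. The cleanest route is to use the fact that $Y(G, e)$ is determined by $X(G \bslash e)$ and $X(G \setminus e)$: since each graph operation commutes with contracting and deleting $e$ (as long as $e$ is not the edge being operated on), the transformations of these two $X$-sets lift to the correct transformation of $Y(G, e)$. A short case analysis is still needed to handle the case where the $\astk$ sits exactly at the duplicated or subdivided edge --- this is precisely where the terms $s 0\astk$, $s\astk 0$, $s 1\astk$, and $s \astk 1$ in the definitions of $D_k(Y)$ and $D'_k(Y)$ arise --- but this step is routine once the setup is in place.
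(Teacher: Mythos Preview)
Your proposal is correct and follows essentially the same approach as the paper: induction along the generating sequences of \cref{lem:spa,lem:spb}, identifying loop/leaf addition with padding by a fixed $0$/$1$ (where $t$ is unchanged) and edge duplication/subdivision with $D_i$/$D'_i$ (handled by \cref{lem:dupdegen}). The paper treats only the vertex case in detail and asserts the edge case is ``almost identical,'' so your remarks about tracking the $\astk$ via commuting the operations with contraction/deletion of $e$ supply exactly the extra bookkeeping the paper elides.
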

\begin{proof}
We will again only prove the vertex case. Observe that $X(K_1) = L_{0,0}$ is trivially $p$-degenerate. It suffices to show that if applying one of the four operations of \cref{lem:spa} turns $G$ into $G'$, then the $p$-degeneracy of $X(G)$ implies the $p$-degeneracy of $X(G')$.

Since a spanning tree can never contain a loop, if a loop is added then $X(G')$ is the same as $X(G)$, except with an extra $0$ in the coordinate corresponding to the added loop. It follows that for any $X'$, we have $t(X(G), X') = t(X(G'), X')$, provided both are defined. Therefore, $X(G')$ is $p$-degenerate (with the same parameters as $X(G)$). Similarly, adding a leaf adds an edge that must be present in all spanning trees, so in this case $X(G')$ is the same as $X(G)$, but with an extra $1$ in the coordinate corresponding to the added edge. Hence we are done in this case as well.

Suppose that the $i$th edge in $G$, called $e$, is duplicated to yield edges $e_1$ and $e_2$ in a graph $G'$. In this case, we claim that $X(G')$ is, under an appropriate ordering, $D_i X(G)$. This follows from the fact that the spanning trees of $G'$ are simply the spanning trees of $G$, but with any occurrence $e$ replaced with exactly one of $e_1$ and $e_2$. Therefore, the $p$-degeneracy of $X(G')$ follows from \cref{lem:dupdegen}.

Similarly, if the $i$th edge in $G$, called $e$, is subdivided to yield edges $e_1$ and $e_2$ in a graph $G'$, then the spanning trees of $G'$ are the spanning trees of $G$, except with exactly one of $e_1$ and $e_2$ added if the original tree doesn't contain $e$ and both $e_1$ and $e_2$ added if the original tree contains $e$. Therefore $X(G')$ is, under an appropriate ordering, the same as $D'_i X(G)$, so we are again done by \cref{lem:dupdegen}.
\end{proof}

\subsection{From layers to the hypercube}\label{sec:mainproof}
Finally, we move from layers to the full hypercube. By summing over all layers, it is easy to see that $\ell$-degeneracy implies $h$-degeneracy. In fact, this proves something even stronger: that the pattern is degenerate even if we restrict the notion of embedding. Indeed, call a embedding $\psi\colon Q_d \to Q_n$ \vocab{oriented} if it preserves the natural partial orders on $Q_d$ and $Q_n$, or equivalently, if there are constants $a'$ and $b'$ such that $\psi(L_{a,b}) \subseteq L_{a+a',b+b'}$ for all $a,b \geq 0$ with $a+b=d$. Then every $\ell$-degenerate pattern is $h$-degenerate even when only oriented embeddings are considered.

Applying a similar argument (which becomes more complicated due to the quantitative bounds) to the $p$-degenerate $X(G)$ and $Y(G, e)$ yields the strongest versions of the main results of this paper:
\begin{thm} \label{thm:fullv}
For every $G \in \cG$ there exist constants $c_1,c_2,c_3,c_4 > 0$ such that if $n \geq e(G)$, $S\subseteq V(Q_n)$ has size $\delta 2^n$, and $\delta \geq c_1 n^{-c_2}$, then at least a $c_3 \delta^{c_4}$-proportion of oriented embeddings $\psi\colon Q_{e(G)} \to Q_n$ satisfy $\psi(X(G)) \subseteq S$.
\end{thm}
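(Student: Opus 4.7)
The plan is to lift the layer-wise $p$-degeneracy of $X(G)$, with constants $\alpha_1, \alpha_2, \alpha_3, \alpha_4$ supplied by \cref{thm:core}, to the full hypercube by decomposing $S$ along the layers of $Q_n$. Write $a = e(G)-v(G)+1$ and $b = v(G)-1$ so that $X(G) \subseteq L_{a,b}$, and let $S_k = S \cap L_{n-k,k}$ with density $\delta_k = |S_k|/\binom{n}{k}$; by definition $\sum_k \binom{n}{k}\delta_k = \delta 2^n$. Because every oriented embedding $\psi\colon Q_{e(G)} \to Q_n$ sends $L_{a,b}$ into a single layer of $Q_n$, the oriented embeddings partition by the layer $L_{n-k,k}$ containing $\psi(X(G))$, with the $k$th part being in bijection with $P_{a,b,n-k,k}$. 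The total number of oriented embeddings is therefore $Z := \sum_k |P_{a,b,n-k,k}|$, and it suffices to show that the number of ``good'' embeddings (those with $\psi(X(G)) \subseteq S$) is at least $c_3\delta^{c_4} Z$.

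The next step is to restrict to a set $\mathcal{K}$ of layers on which the $p$-degeneracy hypothesis applies. First, the boundary layers $\{k : \min(k, n-k) < n/4\}$ hold at most $2^n e^{-\Omega(n)}$ vertices in total by Chernoff, which is dominated by $\delta 2^n$ once $\delta \geq c_1 n^{-c_2}$ and $n$ is large. Second, discarding any remaining layer with $\delta_k < \delta/4$ erases at most another $\delta 2^n/4$. Letting $\mathcal{K}$ denote the surviving layers, we have $\sum_{k\in\mathcal{K}}\binom{n}{k}\delta_k \geq \delta 2^n/4$, and each $k \in \mathcal{K}$ satisfies $\delta_k \geq \delta/4$ and $\min(n-k,k) \geq n/4$, so the density hypothesis $\delta_k \geq \alpha_1 \min(n-k,k)^{-\alpha_2}$ of \cref{def:pdegen} is implied, for appropriately chosen $c_1$ and $c_2$, by $\delta \geq c_1 n^{-c_2}$.

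Finally, \cref{thm:core} applied on each $k \in \mathcal{K}$ yields that at least an $\alpha_3 \delta_k^{\alpha_4}$-fraction of the $|P_{a,b,n-k,k}|$ embeddings into $L_{n-k,k}$ are good, so in total at least $\alpha_3 \sum_{k\in\mathcal{K}} \delta_k^{\alpha_4}|P_{a,b,n-k,k}|$ embeddings are good. A direct computation identifies the ratio $|P_{a,b,n-k,k}|/(Z\binom{n}{k}/2^n)$ with a constant (depending only on $G$) times the hypergeometric probability $\binom{n-k}{a}\binom{k}{b}/\binom{n}{a+b}$, which is bounded below by a positive constant $c_G$ on the bulk range $k/n \in [1/4, 3/4]$. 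Setting $q := \max(\alpha_4, 1)$, using $\delta_k \leq 1$ to bound $\delta_k^{\alpha_4} \geq \delta_k^q$, and applying Jensen's inequality to the convex function $x \mapsto x^q$ with weights $\binom{n}{k}/2^n$ (whose total over $\mathcal{K}$ is at most $1$), one obtains
\begin{align*}
\sum_{k\in\mathcal{K}} \delta_k^{\alpha_4}\frac{\binom{n}{k}}{2^n} &\geq \sum_{k\in\mathcal{K}} \delta_k^q \frac{\binom{n}{k}}{2^n} \\
&\geq \left(\sum_{k\in\mathcal{K}} \delta_k\frac{\binom{n}{k}}{2^n}\right)^q \geq (\delta/4)^q,
\end{align*}
which combines with the previous estimates to give the theorem with $c_4 = q$. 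The only nontrivial ingredient is the hypergeometric estimate for $|P_{a,b,n-k,k}|/Z$ on the bulk; everything else is careful bookkeeping around the truncations and Jensen's inequality, and the structural work is carried out entirely by \cref{thm:core}.
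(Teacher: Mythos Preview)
Your proposal is correct and follows essentially the same approach as the paper: decompose $S$ by layers, restrict to a bulk range (you use $[n/4,3n/4]$, the paper uses $[n/3,2n/3]$), invoke the $p$-degeneracy of $X(G)$ from \cref{thm:core} layerwise, apply Jensen to the convex map $x\mapsto x^{c_4}$ against the binomial weights, and compare $|P_{a,b,n-k,k}|$ to $Z\binom{n}{k}/2^n$ via the same computation. The only cosmetic difference is that the paper subtracts the threshold $c'_1\min(k,n-k)^{-c_2}$ from each layer density and then runs a dichotomy, whereas you instead discard the sparse layers outright; both devices serve the same purpose and the remaining bookkeeping is identical.
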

\begin{thm} \label{thm:fulle}
For every $(G, e) \in \cG'$ there exist constants $c_1,c_2,c_3,c_4 > 0$ such that if $n \geq e(G)-1$, $S\subseteq E(Q_n)$ has size $\delta e(Q_n)$, and $\delta \geq c_1 n^{-c_2}$, then at least a $c_3 \delta^{c_4}$-proportion of oriented embeddings $\psi\colon Q_{e(G)} \to Q_n$ satisfy $\psi(Y(G, e)) \subseteq S$.
\end{thm}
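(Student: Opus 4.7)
The plan is to upgrade \cref{thm:core}, which gives $p$-degeneracy layer-by-layer, to the full hypercube by summing over layers. Given an oriented embedding $\psi\colon Q_{e(G)} \to Q_n$, the image of the layer $L'_{a,b}$ containing $Y(G,e)$ (with $a = e(G)-v(G)$, $b = v(G)-2$) lands in some $L'_{a', b'}$ of $E(Q_n)$, and the pair $(a',b')$ is distributed according to $(a+\mathrm{Bin}(n-e(G)+1,1/2),\,b+\mathrm{Bin}(n-e(G)+1,1/2))$ when the embedding is chosen uniformly at random. Writing $\delta_{a',b'} = |S\cap L'_{a',b'}|/|L'_{a',b'}|$, $w_{a'}$ for the fraction of embeddings landing in $L'_{a',b'}$, and $u_{a'} = |L'_{a',b'}|/e(Q_n)$ for the relative layer size, one has $\sum_{a'} u_{a'}\delta_{a',b'} = \delta$. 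By \cref{thm:core}, the fraction of embeddings hitting $S$ in layer $L'_{a',b'}$ is at least $c_3\delta_{a',b'}^{c_4}$ whenever $\delta_{a',b'}\geq c_1\min(a',b')^{-c_2}$.

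First I would restrict to ``central'' layers $a'\in A := [n/3,2n/3]$. By Chernoff bounds, the mass of $S$ in non-central layers is at most $O(e^{-cn})e(Q_n)$, so essentially all of $S$ lives in central layers. On $A$ the quantities $w_{a'}$ and $u_{a'}$ are each of order $1/\sqrt{n}$ by Stirling, and their ratio is bounded above and below by constants depending only on $G$ (a routine central binomial estimate comparing the $(n-e(G)+1)$-trial and $n$-trial binomials). The threshold becomes $\delta_{a',b'}\geq c_1(n/3)^{-c_2}$, so further discarding the subset of $a'\in A$ where this fails costs at most $c_1(n/3)^{-c_2}$ in the total mass. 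Choosing $\delta\geq Cn^{-c_2}$ for $C$ large enough, I can guarantee that the remaining ``good'' set $A'\subseteq A$ satisfies $\sum_{a'\in A'}u_{a'}\delta_{a',b'}\geq \delta/2$, and therefore by weight comparability $\sum_{a'\in A'}w_{a'}\delta_{a',b'}\geq c_0\delta$ for some constant $c_0>0$.

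It remains to convert this lower bound on $\sum w_{a'}\delta_{a',b'}$ into one on $\sum w_{a'}\delta_{a',b'}^{c_4}$. Inspecting the proof of \cref{lem:dupdegen}, the exponent doubles at every duplication/coduplication step, while the base patterns $X(K_1)$ and $Y(C_2,e)$ have $c_4=1$; so $c_4\geq 1$ throughout, making $x\mapsto x^{c_4}$ convex. Jensen's inequality with sub-probability weights $w_{a'}$ then gives
\[\sum_{a'\in A'}w_{a'}\delta_{a',b'}^{c_4}\;\geq\;\Bigl(\sum_{a'\in A'}w_{a'}\Bigr)^{1-c_4}\Bigl(\sum_{a'\in A'}w_{a'}\delta_{a',b'}\Bigr)^{c_4}\;\geq\;(c_0\delta)^{c_4},\]
where the last inequality uses $\sum_{a'\in A'}w_{a'}\leq 1$ and $1-c_4\leq 0$. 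Multiplying by $c_3$ yields the conclusion of \cref{thm:fulle}, and \cref{thm:fullv} is proved by the same argument applied to $X(G)$ and the vertex layers $L_{a,b}$.

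The main obstacle I anticipate is the weight comparison between $w_{a'}$ and $u_{a'}$ on the central region: although this is ``only'' a binomial coefficient estimate, the two weights come from binomials with different numbers of trials and so their ratio has to be controlled carefully. A secondary subtlety is that the monotonicity $c_4\geq 1$ must be extracted inductively from the proof of \cref{lem:dupdegen}, since it is not recorded in its statement.
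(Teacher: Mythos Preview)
Your approach is essentially the same as the paper's: restrict to central layers, use the $p$-degeneracy of $Y(G,e)$ from \cref{thm:core} layer-by-layer, apply Jensen with $c_4\ge 1$, and compare the embedding weights $w_{a'}$ with the layer weights $u_{a'}$. The paper applies Jensen in the $u$-weights and converts to $w$-weights afterward, whereas you convert first and then apply Jensen; both orderings work.

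Two small corrections. First, your statement that $w_{a'}$ and $u_{a'}$ are each $\Theta(1/\sqrt{n})$ on all of $[n/3,2n/3]$ is false (both decay exponentially away from $n/2$); fortunately only the \emph{ratio} is needed, and that is indeed bounded above and below on the central region, since $w_{a'}/u_{a'}=2^{a+b}\binom{n-1-a-b}{a'-a}/\binom{n-1}{a'}=\Theta_{a,b}(1)$ there. Second, you do not need to trace $c_4\ge 1$ through the induction in \cref{lem:dupdegen}: since $\delta\le 1$, one may simply replace $c_4$ by $\max(c_4,1)$ in \cref{def:pdegen} without loss, which is exactly what the paper does.
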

These evidently imply \cref{thm:mainv,thm:maine} as they imply that $\ex(Q_n, X(G)) \leq c_1 n^{-c_2} 2^n$ and $\ex(Q_n, Y(G, e)) \leq c_1 n^{-c_2} e(Q_n)$, respectively.

As the proofs of \cref{thm:fullv,thm:fulle} are almost identical, we will only prove \cref{thm:fullv}.
\begin{proof}[Proof of \cref{thm:fullv}]
By \cref{thm:core}, $X(G)$ is $p$-degenerate, and let $c'_1,c'_2,c'_3,c'_4$ be the constants in \cref{def:pdegen}. Without loss of generality assume $c'_4 \geq 1$. We commit now to choosing $c_2 = c'_2$ and $c_4 = c'_4$, while $c_1$ and $c_3$ are to be determined later.

Suppose $X(G) \subseteq L_{a,b} \subseteq V(Q_{a+b})$ and take some $S \subseteq V(Q_n)$ with $\abs{S} = \delta 2^n$. It suffices to show that either $\delta = O(n^{-c_2})$ or an $\Omega(\delta^{c_4})$-proportion of oriented embeddings $\psi$ satisfy $\psi(X(G)) \subseteq S$.

By adjusting $c_1$, we may take $n$ to be arbitrarily large. Assume $n \geq 3\max(a,b)$ and for $0 \leq k \leq n$ define
\begin{align*}
\alpha_k &= \begin{cases} \max(\abs{S \cap L_{k, n-k}}/\binom{n}{k} - c'_1 \min(k, n-k)^{-c_2}, 0) & n/3 \leq k \leq 2n/3 \\
0 & \text{else},
\end{cases} \\
\beta_k &= \begin{cases} t(X(G), S \cap L_{k,n-k}) & n/3 \leq k \leq 2n/3 \\
0 & \text{else}.
\end{cases}
\end{align*}
and note that $p$-degeneracy tells us that $\beta_k \geq c'_3 \alpha_k^{c_4}$. Thus, since $c_4 \geq 1$, by Jensen's inequality we conclude that
\[\sum_{0 \leq k \leq n} \frac{\binom{n}{k}}{2^n} \beta_k \geq c'_3 \paren*{\sum_{0 \leq k \leq n} \frac{\binom{n}{k}}{2^n} \alpha_k}^{c_4}.\]
As such, we either have $\sum_{0 \leq k \leq n} \binom{n}{k}/2^n \cdot \alpha_k \leq \delta/2$ or $\sum_{0 \leq k \leq n} \binom{n}{k}/2^n \cdot \beta_k \geq c'_3(\delta/2)^{c_4}$.

In the former case, we observe that
\[\delta/2 \geq \sum_{0 \leq k \leq n} \frac{\binom{n}{k}}{2^n} \alpha_k \geq \frac{1}{2^n}\sum_{n/3 \leq k \leq 2n/3} \paren*{\abs{S \cap L_{k,n-k}} - c'_1 (n/3)^{-c_2} \binom{n}{k}},\]
so
\begin{align*}
\delta &\leq \frac{1}{2^n}\sum_{k < n/3 \text{ or } k > 2n/3} \binom{n}{k} + \frac{1}{2^n} \sum_{n/3 \leq k \leq 2n/3} \abs{S \cap L_{k,n-k}} \\
&\leq \frac{1}{2^n} \sum_{k < n/3 \text{ or } k > 2n/3} \binom{n}{k} + \delta/2 + \frac{c'_1 (n/3)^{-c_2}}{2^n} \sum_{n/3 \leq k \leq 2n/3}  \binom{n}{k} \\
&\leq \delta/2 + \frac{1}{2^n} \sum_{k < n/3 \text{ or } k > 2n/3} \binom{n}{k} + c'_1 (n/3)^{-c_2}.
\end{align*}
The second term is well-known to be $O(0.95^n)$, so we have $\delta \leq \delta/2 + O(n^{-c_2})$, as desired.

In the latter case, we observe that every oriented embedding $Q_{a+b} \to Q_n$ must send $L_{a,b}$ to some $L_{k, n-k}$ with $a \leq k \leq n-b$; for a given $k$, such oriented embeddings are described perfectly by the elements of $P_{a,b,k,n-k}$, such that the restriction of the oriented embedding corresponding to some $p \in P_{a,b,k,n-k}$ to $L_{a,b}$ is precisely $\psi_p$. Hence, the proportion of oriented embeddings $\psi$ with $\psi(X(G))\subseteq S$ is
\[\frac{\sum_{a \leq k \leq n-b} \abs{P_{a,b,k,n-k}} \cdot  t(X(G), S\cap L_{k, n-k})}{\sum_{a \leq k \leq n-b} \abs{P_{a,b,k,n-k}}}.\]
We may now compute
\[\sum_{a \leq k \leq n-b} \abs{P_{a,b,k,n-k}} = \sum_{a \leq k \leq n-b} \frac{n!}{(k-a)!(n-k-b)!} = 2^{n-a-b} \frac{n!}{(n-a-b)!} = O(n^{a+b}2^n),\]
while for $n/3 \leq k \leq 2n/3$ we have
\[\frac{\abs{P_{a,b,k,n-k}}}{\binom{n}{k}} = \frac{k! (n-k)!}{(k-a)! (n-k-b)!} = \Omega(n^{a+b}).\]
Therefore, there is some constant $C$ such that
\[\frac{\sum_{a \leq k \leq n-b} \abs{P_{a,b,k,n-k}} \cdot  t(X(G), S\cap L_{k, n-k})}{\sum_{a \leq k \leq n-b} \abs{P_{a,b,k,n-k}}} \geq \frac{Cn^{a+b} \sum_{n/3\leq k \leq 2n/3} \binom{n}{k} \beta_k}{n^{a+b} 2^n} \geq Cc'_3(\delta/2)^{c_4},\]
as desired.
\end{proof}

\section{Additional Results} \label{sec:add}
\subsection{Duality}
If $a$ and $b$ are nonnegative integers and $X \subseteq L_{a,b}$, let $X^* \subseteq L_{b,a}$ be the set obtained from $X$ by swapping $0$s and $1$s. By reversing the cube, it is easy to see that $X$ is $h$-degenerate if and only if $X^*$ is, even under the stricter notion of oriented embedding used in \cref{sec:mainproof}. Moreover, the same is true for $Y \subseteq L'_{a,b}$ and $Y^* \subseteq L'_{b,a}$, defined similarly.

This symmetry is also evident in \cref{thm:mainv,thm:maine}, as shown by the following observation.
\begin{prop} \label{prop:duality}
Let $G\in \cG$ and let $G^*$ be a dual of $G$. Then $X(G)^* = X(G^*)$. Moreover, if $(G, e) \in \cG'$ and $(G^*, e^*)$ is a dual, then $Y(G, e)^* = Y(G^*, e^*)$.
\end{prop}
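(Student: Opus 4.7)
The plan is to reduce both claims to the standard matroid-duality statement for planar graphs: if $G$ is a connected planar graph with planar dual $G^*$, then, under the canonical bijection $E(G) \leftrightarrow E(G^*)$, a subset $T \subseteq E(G)$ is the edge set of a spanning tree of $G$ if and only if $E(G) \setminus T$ is the edge set of a spanning tree of $G^*$. I would state this as a preliminary fact and then translate it into statements about 0/1 strings.

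For the vertex case, choose an ordering of $E(G)$ and use the same ordering for $E(G^*)$ via the duality correspondence. In the resulting string encoding, the map $X \mapsto X^*$ that swaps $0$s and $1$s is exactly the edge-complement map $T \mapsto E(G) \setminus T$. Matroid duality then gives
\[X(G)^* = \setmid{E(G) \setminus T}{T \in X(G)} = \setmid{T^* \subseteq E(G^*)}{T^* \text{ a spanning tree of } G^*} = X(G^*).\]
This handles the first assertion in essentially one line once the setup is in place.

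For the edge case, fix an ordering of $E(G) \setminus \{e\}$ (identified with $E(G^*) \setminus \{e^*\}$) so that the single $\astk$-coordinate of $L'_{\cdot,\cdot}$ sits in the same slot in both $Y(G,e)$ and $Y(G^*, e^*)$. A pair in $Y(G, e)$ has the form $\{S, S \cup \{e'\}\}$ with $S \in X(G\bslash e)$ and $S \cup \{e'\} \in X(G \setminus e)$. I would then invoke the matroid fact that contraction and deletion swap under planar duality, specifically: $T$ is a spanning tree of $G\bslash e$ iff $(E(G)\setminus\{e\}) \setminus T$ is a spanning tree of $G^* \setminus e^*$, and $T$ is a spanning tree of $G \setminus e$ iff $(E(G)\setminus\{e\}) \setminus T$ is a spanning tree of $G^* \bslash e^*$. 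Writing $\bar S := (E(G)\setminus\{e\}) \setminus S$, this sends the pair $\{S, S \cup \{e'\}\}$ to $\{\bar S, \bar S \setminus \{e'\}\} = \{S', S' \cup \{e'\}\}$ with $S' = \bar S \setminus \{e'\} \in X(G^*\bslash e^*)$ and $S' \cup \{e'\} = \bar S \in X(G^* \setminus e^*)$, which is precisely an element of $Y(G^*, e^*)$.

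Finally I would verify that this edge-complement map coincides on strings with the $0/1$-swap (while leaving the $\astk$ fixed): in the string for $\{S, S\cup\{e'\}\}$, position $e'$ holds $\astk$, positions in $S$ hold $1$, and positions in $(E(G)\setminus\{e\}) \setminus (S \cup \{e'\}) = \bar S \setminus \{e'\}$ hold $0$. Swapping $0$s and $1$s puts $1$s on $\bar S \setminus \{e'\}$, $0$s on $S$, and keeps the $\astk$ at $e'$, which is exactly the string encoding the pair $\{S', S' \cup \{e'\}\}$ produced above. The only real care needed is the bookkeeping at the $\astk$-coordinate and the fact that the roles of $G\bslash e$ and $G \setminus e$ correctly swap with those of $G^*\setminus e^*$ and $G^*\bslash e^*$; this is the main (mild) obstacle but follows from the standard statement of planar matroid duality.
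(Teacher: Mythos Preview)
Your proposal is correct and is exactly the approach the paper takes: the paper's proof is a one-line appeal to the classical fact that the complement of a spanning tree in a planar graph is a spanning tree of the dual, and you have simply (and correctly) unpacked the bookkeeping, especially for the edge case where the contraction/deletion swap under duality does the work. There is nothing to add.
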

\begin{proof}
This result follows from the classical fact that the complement of a spanning tree in a planar graph determines a spanning tree of the dual graph.
\end{proof}

\subsection{An alternate graph-theoretic perspective} \label{sec:pj}
If we only care about the graph structure of $h$-degenerate edge patterns, which must be $g$-degenerate, we can find an alternate description that only uses basic graph operations. To better discuss this notion, for connected series-parallel $G$ and $e \in E'(G)$ let $H(G, e)$ be the induced (bipartite) subgraph on $Q_{e(G)-1}$ on vertex set $X(G\bslash e) \cup X(G\setminus e)$. By construction, $Y(G, e)$ gives an embedding of $H(G, e)$ in $Q_{e(G)-1}$, so $H(G, e)$ is $g$-degenerate.
\begin{defn}
If $H_1$ and $H_2$ be connected bipartite graphs with bipartitions $(A_1, B_1)$ and $(A_2, B_2)$, let the \vocab{product-join} of $H_1$ and $H_2$ along $A_1$ and $A_2$ be the (connected) bipartite graph with bipartition $(A_1 \times A_2, (A_1 \times B_2) \sqcup (B_1 \times A_2))$ where $(a_1,a_2) \in A_1 \times A_2$ is connected to $(a_1,b_2)$ for all $b_2$ adjacent to $a_2$ in $H_2$ and $(b_1, a_2)$ for all $b_1$ adjacent to $a_2$ in $H$. Call a graph \vocab{product-join-reducible} if it can be obtained from copies of $K_2$ via taking connected subgraphs and product-joins.
\end{defn}
\begin{thm} \label{thm:pjr}
The following are equivalent for a connected graph $H$:
\begin{parts}
\item $H$ is product-join-reducible.
\item $H$ is a subgraph of $H(G, e)$ for some $(G, e) \in \cG'$.
\item $H$ is a subgraph of $H(G, e)$ for some $(G, e) \in \cG''$.
\end{parts}
\end{thm}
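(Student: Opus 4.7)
The plan is to prove the three equivalences via the cycle (c) $\Rightarrow$ (b) $\Rightarrow$ (c) and the separate equivalence (a) $\Leftrightarrow$ (c). The implication (c) $\Rightarrow$ (b) is immediate since $\cG'' \subseteq \cG'$.

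For (b) $\Rightarrow$ (c), I would use the block decomposition of $G$. Since $e$ is neither a bridge nor a loop, it lies in a unique $2$-connected block $B$, so $(B, e) \in \cG''$. Spanning trees of $G$ factor as products of spanning trees of the blocks, yielding $X(G\bslash e) = X(B\bslash e) \times \prod_{B' \neq B} X(B')$ and $X(G\setminus e) = X(B\setminus e) \times \prod_{B' \neq B} X(B')$. The graph $H(G,e)$ is bipartite with the two sides differing by one in Hamming weight, so no hypercube edge can change a coordinate in a block other than $B$, as the resulting vector would fail to encode a spanning tree of that block. Hence $H(G, e)$ is a disjoint union of copies of $H(B, e)$, one per choice of spanning trees in the remaining blocks, and any connected $H \subseteq H(G, e)$ sits inside a single copy.

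For (c) $\Rightarrow$ (a), I would apply \cref{lem:spd} to write $(G, e) \in \cG''$ as built from copies of $C_2$ via duals and $2$-sums, and interpret each operation at the level of bipartite graphs. Direct computation gives $H(C_2, e) = K_2$, and \cref{prop:duality} implies that duals leave $H(G, e)$ unchanged up to swapping the two sides of the bipartition. The heart of the proof is the $2$-sum step: if $(G, e) = (G_1, e_1) \oplus_2 (G_2, e_2)$, then $G\bslash e$ is a $1$-sum at a vertex, giving $X(G\bslash e) = X(G_1\bslash e_1) \times X(G_2\bslash e_2)$. Meanwhile $G\setminus e$ is $G_1\setminus e_1$ and $G_2\setminus e_2$ glued along both endpoints of $e$, and the path between these two vertices in a spanning tree of $G\setminus e$ must lie in one half; its restriction there is a spanning tree of some $G_j \setminus e_j$, while the restriction to the other half is a spanning tree of $G_{3-j} \bslash e_{3-j}$. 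Hence $X(G\setminus e) = (X(G_1\setminus e_1) \times X(G_2\bslash e_2)) \cup (X(G_1\bslash e_1) \times X(G_2\setminus e_2))$, matching the vertex set of the product-join of $H(G_1, e_1)$ and $H(G_2, e_2)$ along their $X(\cdot\bslash\cdot)$-sides; a similar check on hypercube adjacencies recovers the product-join's edge set exactly. Closure under connected subgraphs then gives (a).

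For (a) $\Rightarrow$ (c), I would induct on the construction of $H$. The base $H = K_2$ is realized by $H(C_2, e)$. For a product-join of $H_1$ and $H_2$ along specified parts $S_1, S_2$, apply the inductive hypothesis to obtain $H_j \subseteq H(G_j, e_j)$ with $(G_j, e_j) \in \cG''$ and $S_j$ contained in one side of $H(G_j, e_j)$; if $S_j$ lies in the $X(G_j\setminus e_j)$-side, replace $(G_j, e_j)$ by its planar dual so that the swapped bipartition puts $S_j$ on the $X(\cdot\bslash\cdot)$-side. The previous $2$-sum calculation then embeds $H$ into $H((G_1, e_1) \oplus_2 (G_2, e_2), e)$, and closure under connected subgraphs is trivial. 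The main obstacle will be the $2$-sum computation inside (c) $\Rightarrow$ (a): unpacking the spanning-tree structure of a $2$-sum along $e$ and verifying that both the bipartition and the hypercube adjacencies of $H(G, e)$ align with the product-join structure requires careful bookkeeping.
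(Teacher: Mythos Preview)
Your proposal is correct and follows essentially the same approach as the paper: the block-decomposition argument for (b) $\Leftrightarrow$ (c) is exactly \cref{prop:g0}, and your $2$-sum/product-join computation together with the dual-swaps-sides observation is precisely the content of \cref{lem:gluing} and \cref{prop:fullpj}, which the paper then invokes to finish. The only cosmetic difference is that the paper first isolates the clean statement ``$H$ arises from copies of $K_2$ via product-joins iff $H \cong H(G,e)$ for some $(G,e)\in\cG''$'' as \cref{prop:fullpj}, and then handles the subgraph closure in one line by noting that a product-join of subgraphs embeds in the product-join of the ambient graphs; your induction carries the subgraph step through each stage instead, but the work is identical.
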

\begin{rmk}
One way to interpret this result is through recognizing the product-join of $H_1$ and $H_2$ as the subgraph of the \vocab{box product} $H_1 \mathbin{\square} H_2$ obtained by deleting $B_1 \times B_2$. The box product of $n$ copies of $K_2$ is $Q_n$, so the product-join-reducible graphs are simply those obtained through the same box products, except that at each intermediate stage the graph is pruned to lie in a single layer of the hypercube.
\end{rmk}
To prove \cref{thm:pjr}, we first need a few preliminary results about $H(G, e)$.
\begin{prop} \label{prop:g0}
Let $(G, e) \in \cG'$ and let $G_0$ be the block of $G$ containing $e$. Then $(G_0, e) \in \cG'$ and $H(G, e)$ consists of some number of disjoint copies of $H(G_0, e)$.
\end{prop}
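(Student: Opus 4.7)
The plan is to exploit the canonical decomposition of spanning trees into spanning trees of each block. First, since $e$ is neither a bridge nor a loop, the block $G_0$ containing $e$ is $2$-connected, and it is series-parallel as a subgraph of $G$. Hence $(G_0, e) \in \cG'' \subseteq \cG'$.

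Let $G_1, \ldots, G_k$ denote the remaining blocks of $G$. I will use the classical fact that a set $T \subseteq E(G)$ is a spanning tree of a connected multigraph $G$ if and only if $T \cap E(G_j)$ is a spanning tree of $G_j$ for every block $G_j$. Applied to $G \bslash e$ and $G \setminus e$, which differ from $G$ only inside $G_0$, this yields natural bijections
\[X(G \bslash e) \longleftrightarrow X(G_0 \bslash e) \times \prod_{j=1}^{k} X(G_j), \qquad X(G \setminus e) \longleftrightarrow X(G_0 \setminus e) \times \prod_{j=1}^{k} X(G_j).\]
Under any ordering of $E(G) \setminus \set{e}$ that places the edges of $E(G_0) \setminus \set{e}$ first, each tuple $(T_1, \ldots, T_k) \in \prod_j X(G_j)$ indexes a ``fiber'' inside $Q_{e(G)-1}$ whose vertex set is a translate of the vertex set of $H(G_0, e)$. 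Distinct fibers disagree on the coordinates outside $G_0$, so they are vertex-disjoint.

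The only nontrivial claim is that every edge of $H(G, e)$ stays within a single fiber and reproduces exactly an edge of the corresponding copy of $H(G_0, e)$. Such an edge corresponds to spanning trees $S \cup \set{e}$ and $S \cup \set{e'}$ of $G$ with $e' \neq e$. The key point is that $e'$ must lie in $E(G_0)$: if instead $e' \in E(G_j)$ for some $j \geq 1$, then $S \cap E(G_j) = (S \cup \set{e}) \cap E(G_j)$ is already a spanning tree of $G_j$, so $(S \cup \set{e'}) \cap E(G_j)$ would contain a cycle, contradicting that $S \cup \set{e'}$ is a spanning tree. Given $e' \in E(G_0)$, the two trees agree on every $G_j$ with $j \geq 1$, so the edge lies in a single fiber and its restriction to the $E(G_0) \setminus \set{e}$ coordinates is precisely an edge of $Y(G_0, e)$; conversely, every edge of $Y(G_0, e)$ lifts to an edge in each fiber. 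The only genuine content of the proof is this block-swap argument; the rest is bookkeeping around the block decomposition.
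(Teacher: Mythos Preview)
Your proof is correct and follows essentially the same approach as the paper: both use the block decomposition of spanning trees to factor $X(G\bslash e)$ and $X(G\setminus e)$ as products, yielding disjoint copies of $H(G_0,e)$. The paper packages the remaining blocks into a single graph $G'$ (the contraction of $G_0$ to a point) rather than listing $G_1,\ldots,G_k$ separately, and it leaves the edge verification implicit, whereas you spell out the argument that the swapped edge $e'$ must lie in $E(G_0)$; these are cosmetic differences only.
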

\begin{proof}
Since $e$ is not a bridge or a loop, $G_0$ is $2$-connected and thus has no bridges or loops, implying that $(G_0, e) \in \cG'$.

Let $G'$ be $G$ but with $G_0$ contracted to a point. Spanning trees of $G$ correspond uniquely to pairs of spanning trees of $G_0$ and $G'$, respectively, so $X(G\bslash e) = X(G_0\bslash e) \times X(G')$ and $X(G\setminus e) = X(G_0\setminus e) \times X(G')$. It follows that $H(G, e)$ consists of $\abs{X(G')}$-many disjoint copies of $H(G_0, e)$.
\end{proof}
\begin{prop} \label{lem:gluing}
Let $(G_1,e_1),(G_2,e_2)\in \cG'$ be such that $H(G_1, e_1)$ and $H(G_2,e_2)$ are connected. Let $(G, e)$ is a $2$-sum of $(G_1,e_1)$ and $(G_2,e_2)$, then $H(G, e)$ is the product-join of $H(G_0,e_1)$ and $H(G,e_2)$ along $X(G_1 \bslash e_1)$ and $X(G_2 \bslash e_2)$.
\end{prop}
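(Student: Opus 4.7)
The plan is to match the two graphs by identifying their vertex sets and then checking the adjacency relations. Let $u, v$ denote the endpoints of $e$ in $G$, identified with the endpoints of $e_1$ in $G_1$ and of $e_2$ in $G_2$.

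For the vertex set, note that $G \bslash e$ is the $1$-sum of $G_1 \bslash e_1$ and $G_2 \bslash e_2$ at the contracted vertex, so its spanning trees biject with pairs of spanning trees of the two factors, giving
\[X(G \bslash e) = X(G_1 \bslash e_1) \times X(G_2 \bslash e_2).\]
For $X(G \setminus e)$, any spanning tree $T$ contains a unique $u$-$v$ path, and since $\set{u, v}$ is the only pair of vertices shared by $G_1 \setminus e_1$ and $G_2 \setminus e_2$, this path lies entirely in one of the two subgraphs. If it lies in $G_1 \setminus e_1$, then $T \cap E(G_1 \setminus e_1)$ is a spanning tree of $G_1 \setminus e_1$, while $T \cap E(G_2 \setminus e_2)$ is a spanning forest of $G_2 \setminus e_2$ with exactly two components separating $u$ from $v$; contracting $e_2$ puts such forests in bijection with spanning trees of $G_2 \bslash e_2$. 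Combining the two symmetric cases and abbreviating $A_i = X(G_i \bslash e_i)$ and $B_i = X(G_i \setminus e_i)$ yields
\[X(G \setminus e) = (B_1 \times A_2) \sqcup (A_1 \times B_2),\]
exactly matching the non-$A_1 \times A_2$ side of the product-join.

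For the edges, an edge of $H(G, e)$ connects some $(a_1, a_2) \in A_1 \times A_2$ to an element of $X(G \setminus e)$ differing by a single coordinate $e' \in E(G) \setminus \set{e}$. Since $E(G) \setminus \set{e}$ is the disjoint union of $E(G_1 \setminus e_1)$ and $E(G_2 \setminus e_2)$, the edge $e'$ lies in exactly one of these two edge sets. If $e' \in E(G_1 \setminus e_1)$, the $A_2$-coordinate is fixed and the adjacency reduces precisely to an edge of $H(G_1, e_1)$ between $a_1$ and some $b_1 \in B_1$; symmetrically for $e' \in E(G_2 \setminus e_2)$. This reproduces exactly the product-join's adjacency rule.

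The main obstacle is the decomposition of $X(G \setminus e)$ in Step 2, specifically the bijection between spanning forests of $G_i \setminus e_i$ with two components separating $u$ and $v$ and spanning trees of $G_i \bslash e_i$. This is essentially the statement that contraction on cycle matroids commutes with taking bases, and may be verified by directly checking that both sides have the right size, are acyclic in $G_i \setminus e_i$, and that the component-count condition is automatic from the edge count.
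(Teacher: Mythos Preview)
Your proof is correct and follows essentially the same approach as the paper's: both arguments identify $X(G\bslash e)$ with $A_1\times A_2$ via the $1$-sum structure of $G\bslash e$, decompose $X(G\setminus e)$ as $(B_1\times A_2)\sqcup(A_1\times B_2)$ by tracking which side carries the $u$--$v$ path, and then verify the adjacency relation coordinatewise. The paper's proof is terser (it simply asserts the decomposition and calls the adjacency check ``straightforward''), whereas you spell out the bijection between two-component spanning forests separating $u,v$ and spanning trees of the contraction, but the underlying argument is the same.
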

\begin{proof}
Observe that $G \bslash e$ is a $1$-sum of $G_1 \bslash e_1$ and $G_2\bslash e_2$, so $X(G\bslash e) = X(G_1 \bslash e_1) \times X(G_2 \bslash e_2)$. Moreover, we have
\[X(G \setminus e) = (X(G_1 \bslash e_1) \times X(G_2 \setminus e_2)) \sqcup (X(G_1 \setminus e_1) \times X(G_2 \bslash e_2)),\]
where the two products represent spanning trees where the path connecting the endpoints of $e$ passes through $G_2$ and $G_1$, respectively. It suffices to verify that the adjacency relation on $H(G, e)$ is exactly that given by the product-join, which is straightforward.
\end{proof}

\begin{prop} \label{prop:fullpj}
A graph can be obtained from copies of $K_2$ through product-joins if and only if it is isomorphic to $H(G, e)$ for some $(G, e) \in \cG''$.
\end{prop}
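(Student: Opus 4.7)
The plan is to prove both directions by induction, leveraging the parallel decompositions provided by \cref{lem:spd} for $\cG''$ (in terms of $C_2$, duals, and $2$-sums) and by product-join reducibility (in terms of $K_2$'s and product-joins), bridged by \cref{lem:gluing} and \cref{prop:duality}.

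For the ``if'' direction, I would induct on the number of product-joins used to build $H$. The base case is $K_2 \cong H(C_2, e_0)$, where $(C_2, e_0) \in \cG''$. In the inductive step, $H$ is the product-join of $H_1$ and $H_2$ along sides $A_1, A_2$, where by induction $H_i \cong H(G_i, e_i)$ for some $(G_i, e_i) \in \cG''$. The key observation is that \cref{prop:duality}, combined with the classical planar duality $X(G_i\bslash e_i)^* = X(G_i^*\setminus e_i^*)$, yields an isomorphism $H(G_i, e_i) \cong H(G_i^*, e_i^*)$ that swaps the two parts of the bipartition. Hence by replacing each $(G_i, e_i)$ with its dual if necessary (noting $\cG''$ is closed under dualization), I can arrange the isomorphism $H_i \cong H(G_i, e_i)$ to send $A_i$ to $X(G_i \bslash e_i)$. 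Then \cref{lem:gluing} identifies $H(G, e)$ with the desired product-join, where $(G, e)$ is the $2$-sum of $(G_1, e_1)$ and $(G_2, e_2)$, which lies in $\cG''$ since $\cG''$ is closed under $2$-sums.

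For the ``only if'' direction, I would induct on the construction of $(G, e)$ provided by \cref{lem:spd}: the base case is $H(C_2, e_0) = K_2$; in the dual case, \cref{prop:duality} immediately gives $H(G, e) \cong H(G^*, e^*)$, so the inductive hypothesis transfers; and in the $2$-sum case, \cref{lem:gluing} expresses $H(G, e)$ as a product-join of graphs that are product-join-reducible by induction.

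The main obstacle is verifying the connectedness hypothesis of \cref{lem:gluing}: each $H(G_i, e_i)$ must be connected. I would address this via a parallel induction along \cref{lem:spd}: $H(C_2, e_0) = K_2$ is connected; the dual step preserves isomorphism type; and a product-join of two connected bipartite graphs is connected (a routine check, since the remark following \cref{thm:pjr} identifies the product-join with the box product $H_1 \mathbin{\square} H_2$ minus $B_1 \times B_2$, through which one can exhibit paths between any two vertices by passing through anchor vertices in $A_1$ and $A_2$). Beyond this connectivity bookkeeping and the bipartition alignment in the ``if'' direction, the argument is essentially a direct translation between the two construction trees.
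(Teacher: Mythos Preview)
Your proposal is correct and follows essentially the same approach as the paper's (brief) proof: both directions rest on the base case $H(C_2,e)=K_2$, the bipartition-swapping isomorphism induced by duality, and the correspondence of \cref{lem:gluing} between $2$-sums and product-joins, all organized via \cref{lem:spd}; your explicit verification of the connectedness hypothesis of \cref{lem:gluing} is a detail the paper leaves implicit. One cosmetic point: you have the ``if'' and ``only if'' labels interchanged --- the paragraph inducting on the number of product-joins is actually the ``only if'' direction, and vice versa.
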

\begin{proof}
We make two observations:
\begin{itemize}
\item If $G = C_2$ and $e\in E(G)$ is arbitrary, then $H(G, e) = K_2$.
\item If $(G^*, e^*)$ is a dual of some $(G, e) \in \cG''$, the canonical isomorphism between $H(G, e)$ and $H(G^*, e^*)$ sends $X(G\bslash e)$ to $X(G^*\setminus e^*)$ and $X(G\setminus e)$ to $X(G^*\bslash e^*)$.
\end{itemize}
With these two observations, the result follows from combining \cref{lem:spd,lem:gluing}, as any sequence of product-joins can be converted to a sequence of duals and $2$-sums, and vice versa.
\end{proof}

\begin{proof}[Proof of \cref{thm:pjr}]
The equivalence of (b) and (c) follows from \cref{prop:g0}. The implication $\text{(c)} \implies \text{(a)}$ follows from \cref{prop:fullpj}. To show $\text{(a)} \implies \text{(c)}$, observe that since a product-join of connected subgraphs of $H_1$ and $H_2$ is a subgraph of a product-join of $H_1$ and $H_2$, any product-join-reducible $H$ must be the connected subgraph of a graph obtained by taking product joins of copies of $K_2$. Applying \cref{prop:fullpj} finishes.
\end{proof}

\subsection{Relation to earlier results} \label{sec:cor}
\begin{figure}
\subcaptionbox{\label{fig:v}}[0.25\linewidth]{%
\begin{tikzpicture}[scale=1.25]
\draw (144:1) -- (72:1) to[bend left] (144:1) to[bend left] node[shift={(108:8pt)}]{$a_1$} (72:1);
\draw (72:1)to[bend left] node[shift={(36:8pt)}]{$a_2$} (0:1) to[bend left](72:1);
\draw (0:1)to[bend left=10](-72:1)to[bend left=10](0:1)to[bend left=30] node[shift={(-36:8pt)}]{$a_3$} (-72:1)to[bend left=30](0:1);
\draw (-72:1)-- node[shift={(-108:8pt)}]{$a_4$} (-144:1);
\path[every node/.style=vtx] (144:1)node{} (72:1)node{} (0:1)node{} (-72:1)node{} (-144:1)node{};
\path (144:1 |- 0,0) node{\rotatebox{90}{$\cdots$}};
\path (-1.5,0) (1.5,0);
\end{tikzpicture}}%
\subcaptionbox{\label{fig:e}}[0.25\linewidth]{%
\begin{tikzpicture}[scale=1.25]
\draw[ultra thick] (-150:1)--(150:1);
\draw (150:1) -- (90:1) to[bend left] (150:1) to[bend left] node[shift={(120:8pt)}]{$a_1$} (90:1);
\draw (90:1)to[bend left] node[shift={(60:8pt)}]{$a_2$} (30:1) to[bend left](90:1);
\draw (30:1)to[bend left=10](-30:1)to[bend left=10](30:1)to[bend left=30] node[shift={(0:8pt)}]{$a_3$} (-30:1)to[bend left=30](30:1);
\draw (-30:1)-- node[shift={(-60:8pt)}]{$a_4$} (-90:1);
\path (150:1)node[vtx]{} (90:1)node[vtx]{} (30:1)node{} (-30:1)node[vtx]{} (-90:1)node[vtx]{} -- node{\rotatebox{-30}{$\cdots$}} (-150:1)node[vtx]{};
\path (-1.5,0) (1.5,0);
\end{tikzpicture}}%
\subcaptionbox{\label{fig:theta}}[0.4\linewidth]{%
\begin{tikzpicture}[every node/.style=vtx]
\begin{scope}[xscale=0.8,yscale=0.5]
\draw (0,0)node{} -- (1,0)node{} -- (2,0)node{} -- (3,0)node{} -- (4,0)node{} -- (3,1)node{} -- (2,1)node{} -- (1,1)node{} -- (0,0) -- (1,-1)node{} -- (2,-1)node{} -- (3,-1)node{} -- (4,0);
\end{scope}
\begin{scope}[shift={(2.4,0.5)},rotate=40,xscale=0.8,yscale=0.5]
\draw (0,0)node{} -- (1,0)node{} -- (2,0)node{} -- (3,0)node{} -- (4,0)node{} -- (3,1)node{} -- (2,1)node{} -- (1,1)node{} -- (0,0) -- (1,-1)node{} -- (2,-1)node{} -- (3,-1)node{} -- (4,0);
\end{scope}
\end{tikzpicture}}%
\caption{\subref{fig:v} Multigraph used in the proof of \cref{cor:alon}. Labels represent edge multiplicities.\quad
\subref{fig:e} Multigraph used in the proof of \cref{cor:partite}. Labels represent edge multiplicities and the distinguished edge highlighted.\quad
\subref{fig:theta} A $1$-sum of two copies of $\Theta_4$ that does not have a partite representation.}
\end{figure}
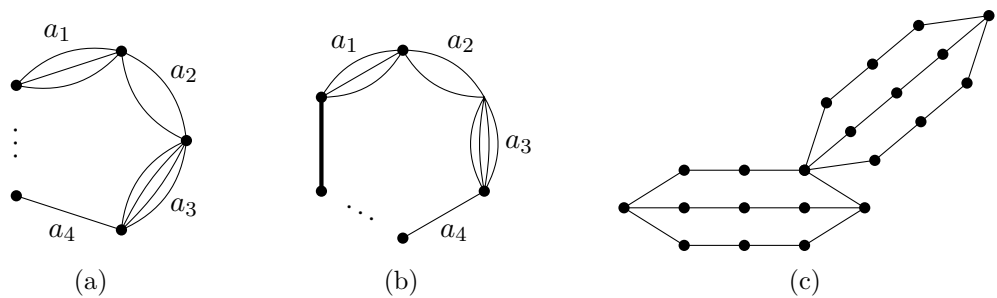

At this point, we now have the tools to demonstrate how a number of earlier results are special cases of \cref{thm:mainv,thm:maine}. First, we recover a result of Alon on $h$-degenerate vertex patterns.
\begin{cor}[{\cite[Lem.~2.2]{Alon24}}] \label{cor:alon}
Let $k$ and $a_1,\ldots,a_k$ be positive integers and let $d = a_1 + \cdots + a_k$. Let $X \subseteq L_{d-k+1,k-1}$ be the set of $\sum_{i \in [k]} \prod_{j \neq i} a_j$ length-$n$ binary strings such that, if divided into blocks of length $a_1,\ldots,a_k$, all but one of the blocks contains exactly one $1$ and the remaining block is all $0$s. Then $X$ is $h$-degenerate.
\end{cor}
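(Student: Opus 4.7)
The plan is to realize $X$ as $X(G)$ for an appropriate connected series-parallel multigraph $G$, and then invoke \cref{thm:mainv} directly. The graph $G$ in question is precisely the one depicted in \cref{fig:v}: take a cycle on $k$ vertices and, for each $i \in [k]$, replace the $i$-th edge of the cycle with a bundle of $a_i$ parallel edges. Order the edges of $G$ by concatenating bundle $1$, bundle $2$, \ldots, bundle $k$, so that $E(G)$ is identified with $[d]$ partitioned into blocks of lengths $a_1, \ldots, a_k$.

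Next I would verify that $G \in \cG$. Connectedness is immediate, and $G$ is series-parallel since it arises from the $k$-cycle (which has no $K_4$ minor) by duplicating edges. Alternatively, one may build $G$ from $K_1$ by leaf additions and edge duplications, which by \cref{lem:spa} keeps it in $\cG$. Counting, $v(G) = k$ and $e(G) = d$, so indeed $X(G) \subseteq L_{d-k+1, k-1}$, matching the ambient layer.

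The central identification is that $X(G) = X$. A spanning tree of $G$ uses exactly $v(G)-1 = k-1$ edges and contains no cycle. Within each bundle of parallel edges, at most one edge can be chosen (otherwise we get a $2$-cycle), and the chosen bundles must not form the full $k$-cycle of $G$ when contracted, i.e., at least one bundle must be avoided entirely. Since only $k-1$ edges are picked from $k$ bundles of size at least $1$, a spanning tree selects exactly one edge from each of $k-1$ bundles and no edges from the remaining bundle. Written as a characteristic vector in $\set{0,1}^d$, this is precisely the description of $X$: after splitting into blocks of lengths $a_1, \ldots, a_k$, one block is all-zero and each other block has exactly one $1$.

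With the identification $X = X(G)$ in hand, \cref{thm:mainv} yields $\ex(Q_n, X) = O(n^{-\eps} 2^n)$ for some $\eps > 0$, which implies $h$-degeneracy. There is no real obstacle here; the only point requiring care is the bookkeeping of vertex/edge counts and the combinatorial description of spanning trees of a cycle of edge-bundles, but both are routine. The only mild subtlety is noting that $X(G)$ is defined only up to a choice of ordering on $E(G)$, but since $h$-degeneracy is preserved under coordinate permutations of $V(Q_n)$ (induced by hypercube automorphisms), this is harmless.
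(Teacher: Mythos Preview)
Your proposal is correct and follows exactly the paper's approach: identify $X$ with $X(G)$ for the multigraph in \cref{fig:v} (a $k$-cycle with the $i$th edge replaced by $a_i$ parallel edges) and apply \cref{thm:mainv}. Your write-up simply fills in the routine verifications that the paper's two-sentence proof leaves implicit.
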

\begin{proof}
This $X$ is $X(G)$, where $G$ consists of $a_1$ parallel edges, $a_2$ parallel edges, etc., all arranged in a cycle. This graph $G$ is depicted in \cref{fig:v}.
\end{proof}

We now turn to edge patterns. Conlon \cite{Conlon10} proved that every graph with a partite representation is $g$-degenerate, which we will now define.
\begin{defn} \label{def:partite}
If $k$ and $a_1,\ldots,a_k$ be positive integers and $n = a_1 + \cdots + a_k$, then let $Y_{a_1,\ldots,a_k} \subseteq L'_{n-k,k-1}$ be the set of $k\prod_{i \in [k]} a_i$ length-$n$ strings such that, if divided into blocks of length $a_1,\ldots,a_k$, all but one of the blocks is all $0$s except for a single $1$, and the remaining block is all $0$s except for a single $\astk$. A graph has a \vocab{partite representation} if it can be expressed as a subgraph of $Y_{a_1,\ldots,a_k}$ for some choice of $a_1,\ldots,a_k$.
\end{defn}
\begin{cor}[{\cite[Thm.~2.1]{Conlon10}}] \label{cor:partite}
$Y_{a_1,\ldots,a_k}$ is $h$-degenerate. In particular, any graph with a partite representation is $g$-degenerate.
\end{cor}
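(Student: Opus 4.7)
The plan is to exhibit an explicit $(G, e) \in \cG'$ satisfying $Y(G, e) = Y_{a_1,\ldots,a_k}$ and then invoke \cref{thm:maine}, in close parallel to the proof of \cref{cor:alon}. The natural candidate $G$ is the multigraph depicted in \cref{fig:e}: a cycle of length $k+1$ in which one edge $e$ has multiplicity $1$ and is distinguished, while the remaining $k$ edges are replaced by $a_1, \ldots, a_k$ parallel copies respectively. This $G$ is clearly connected and series-parallel, and $e$ is neither a bridge nor a loop, so $(G, e) \in \cG'$.

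I would next identify $Y(G, e)$ with $Y_{a_1,\ldots,a_k}$ by direct computation. Order the non-distinguished edges so that the $n = a_1 + \cdots + a_k$ coordinates of binary strings split into blocks of sizes $a_1, \ldots, a_k$, one per parallel group. Since $v(G) = k+1$, every spanning tree uses exactly $k$ edges: a spanning tree containing $e$ must take one edge from $k-1$ of the parallel groups and none from the remaining group (to avoid both disconnection and parallel cycles), while a spanning tree avoiding $e$ takes exactly one edge from each of the $k$ groups. Hence $X(G \bslash e)$ is the set of strings in which one block is entirely $0$ and each other block has a single $1$, and $X(G \setminus e)$ is the set of strings in which every block has a single $1$. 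Comparing Hamming weights, every edge of $Y(G, e)$ is a string with a single $\astk$ (in some block $i$) whose endpoints in the two sets are forced: block $i$ carries the $\astk$ and otherwise zeros, while every other block has a single $1$. This matches the definition of $Y_{a_1,\ldots,a_k}$, so \cref{thm:maine} yields the first claim.

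For the ``in particular'' statement, any $H$ with a partite representation is by definition a subgraph of $Y_{a_1,\ldots,a_k}$ viewed as a graph, so any embedded copy of the pattern $Y_{a_1,\ldots,a_k}$ inside $S \subseteq E(Q_n)$ contains a subgraph isomorphic to $H$; equivalently, $H$-freeness of $S$ precludes the existence of any embedded copy of $Y_{a_1,\ldots,a_k}$, yielding $\ex(Q_n, H) \leq \ex(Q_n, Y_{a_1,\ldots,a_k}) = o(e(Q_n))$. The only nontrivial ingredient is the combinatorial identification of $Y(G, e)$ with $Y_{a_1,\ldots,a_k}$, and I do not anticipate any significant obstacle beyond careful bookkeeping of blocks and coordinates.
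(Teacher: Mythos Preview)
Your proposal is correct and follows exactly the paper's approach: the paper's proof simply asserts that $Y_{a_1,\ldots,a_k} = Y(G, e)$ for the multigraph $(G,e)$ in \cref{fig:e} and appeals to \cref{thm:maine}, which is precisely what you do (with the spanning-tree bookkeeping spelled out in more detail than the paper bothers with).
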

\begin{proof}
Here, $Y_{a_1,\ldots,a_k} = Y(G, e)$, where $(G, e)$ is the multigraph consisting of the distinguished edge $e$, $a_1$ parallel edges, $a_2$ parallel edges, etc., all arranged in a cycle. This multigraph is depicted in \cref{fig:e}.
\end{proof}

\begin{cor}[{\cite[Thm.~1]{Axen23}}] \label{cor:ax}
Any graph whose blocks all have partite representations is $g$-degenerate.
\end{cor}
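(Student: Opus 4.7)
The plan is to prove that every connected graph $H$ whose blocks all have partite representations embeds as a subgraph of $H(G, e)$ for some $(G, e) \in \cG''$. Once this is established, Theorem~\ref{thm:maine} applied to $Y(G, e)$ yields $g$-degeneracy of $H$, since any embedded copy of $Y(G, e)$ in $Q_n$ has edge set that, viewed as a subgraph of $Q_n$, is isomorphic to $H(G, e)$ and therefore contains $H$. I proceed by induction on the number of blocks of $H$. In the base case $H$ is a single block, so by Corollary~\ref{cor:partite} (specifically, the cycle-of-parallel-edges construction in its proof), $H \subseteq H(G_0, e_0)$ for some $(G_0, e_0) \in \cG''$.

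For the inductive step, pick a leaf block $B$ of $H$ with cut vertex $v$, and let $H'$ be the connected subgraph of $H$ obtained by deleting $V(B) \setminus \set{v}$. Both $H'$ and $B$ are connected, have strictly fewer blocks than $H$ (with $B$ having just one block), and inherit their blocks from $H$, all of which have partite representations. The inductive hypothesis therefore gives $H' \subseteq H(G', e')$ and $B \subseteq H(G_B, e_B)$ for some $(G', e'), (G_B, e_B) \in \cG''$; write $v_{H'}$ and $v_B$ for the images of $v$ under these embeddings. The key observation is that, by Proposition~\ref{prop:duality}, we may replace either $(G', e')$ or $(G_B, e_B)$ by its planar dual to swap the two sides of the corresponding bipartition, so we may assume without loss of generality that $v_{H'} \in X(G' \bslash e')$ and $v_B \in X(G_B \bslash e_B)$.

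Now take $(G, e)$ to be the $2$-sum of $(G', e')$ and $(G_B, e_B)$, which lies in $\cG''$. By Proposition~\ref{lem:gluing}, $H(G, e)$ is the product-join of $H(G', e')$ and $H(G_B, e_B)$ along $X(G' \bslash e')$ and $X(G_B \bslash e_B)$. Embed $H$ into this product-join by sending $w \in V(H')$ to $(w, v_B)$ and $w \in V(B)$ to $(v_{H'}, w)$: these two maps agree at $v \mapsto (v_{H'}, v_B)$; every endpoint lies in the vertex set of the product-join precisely because $v_{H'}$ and $v_B$ sit in the $A$-parts of their respective bipartitions; and the adjacency rule of the product-join recovers the edges of both $H'$ and $B$, without introducing spurious edges between them since the two image sets intersect only at $(v_{H'}, v_B)$.

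The main obstacle I anticipate is the alignment condition on the image of the cut vertex, and this is cleanly handled by Proposition~\ref{prop:duality}. A secondary technical point is that the induction must be carried out in $\cG''$ (rather than $\cG'$), so that the connectedness hypothesis of Proposition~\ref{lem:gluing} applies to $H(G', e')$ and $H(G_B, e_B)$; this is ensured by the implication (a)$\implies$(c) of Theorem~\ref{thm:pjr}.
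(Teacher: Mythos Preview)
Your proposal is correct and follows essentially the same route as the paper: both reduce to showing that a $1$-sum of two graphs embeds in their product-join via the map $w\mapsto(w,v_B)$ or $(v_{H'},w)$, and then iterate over blocks. The only cosmetic difference is that the paper packages this as ``product-join-reducible graphs are closed under $1$-sums'' (relabeling the bipartition to place the cut vertex in the $A$-part), whereas you track an explicit $(G,e)\in\cG''$ through $2$-sums and invoke duality for the alignment; you also omit the trivial reduction from disconnected to connected $H$.
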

\begin{proof}
As it is easy to see that the disjoint union of two $g$-degenerate graphs is $g$-degenerate, it suffices to consider $1$-sums of connected graphs with partite representations. By \cref{thm:pjr,cor:partite} all connected graphs with partite representations are product-join-reducible, so it suffices to show that a $1$-sum of product-join-reducible graphs is itself product-join-reducible.

To see this, suppose that $H_1$ and $H_2$ are connected bipartite graphs with respective bipartitions $(A_1, B_1)$ and $(A_2, B_2)$. Then, for any vertices $v_1 \in A_1$ and $v_2 \in A_2$, the $1$-sum of $H_1$ and $H_2$ identifying $v_1$ and $v_2$ is precisely the induced subgraph of the product-join of $H_1$ and $H_2$ along $A_1$ and $A_2$, on the vertex set
\[(\set{v_1} \times (A_2 \sqcup B_2)) \cup ((A_1 \sqcup B_1) \times \set{v_2}).\]
Since connected subgraphs of product-join-reducible graphs are (by definition) product-join-reducible, we are done.
\end{proof}

We conclude this section with an example of a $g$-degenerate graph that is not covered by \cref{cor:ax}, answering a question of Axenovich \cite{Axen23} in the negative.
\begin{examp} \label{ex:counter}
Let $\Theta_4$ be the theta graph consisting of $3$ internally disjoint paths of length $4$ with the same endpoints. It is a result of Marquardt \cite{Mar22} (see also \cite{Axen23}) that $\Theta_4$ has a partite representation, but there exists a $1$-sum of two copies of $\Theta_4$ (shown in \cref{fig:theta}) that does not have a partite representation. By the argument in \cref{cor:ax}, we conclude that there is a product-join of two copies of $\Theta_4$ that does not have a partite representation. However, $\Theta_4$ is $2$-connected, and it is straightforward to show that the product-join of two $2$-connected graphs is $2$-connected. Hence this product-join is $2$-connected and cannot be built from $1$-sums of smaller graphs.
\end{examp}

\subsection{Quantitative matters} \label{sec:quant}
In \cite{JT10}, Johnson and Talbot introduced the quantity $L(d)$, defined to be the size of the largest $h$-degenerate subset of $V(Q_d)$ (see \cref{sec:containment}). In \cite{Alon24}, Alon obtained improved lower and upper bounds on $L(d)$, which match for $d \leq 5$. Here, we further improve the lower bound.
\begin{prop} \label{prop:fib}
For $d \geq 0$, the maximum number of spanning trees of any connected series-parallel graph with $d$ edges is the Fibonacci number $F_{d+1}$ (indexed so that $F_1=F_2=1, F_3=2,\ldots$). In particular, $L(d) \geq F_{d+1}$.
\end{prop}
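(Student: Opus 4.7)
I plan to prove the sharper statement that the maximum of $\tau(G)$ over connected series-parallel multigraphs $G$ with $d$ edges equals exactly $F_{d+1}$, so that $L(d) \geq F_{d+1}$ follows via \cref{thm:mainv} applied to any extremal $G$ (since $|X(G)| = \tau(G)$).

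For the upper bound $\tau(G) \leq F_{d+1}$, I would induct strongly on $d$, with $d \in \{0, 1\}$ trivial. For $d \geq 2$, \cref{lem:spa} (viewing $G$ as built from $K_1$) implies that $G$ contains at least one of the following: a loop, a leaf, a parallel pair, or a series pair (a degree-$2$ vertex with distinct neighbors), corresponding to the four allowed operations. Loops and leaves reduce $\tau(G)$ to that of a smaller connected series-parallel graph with $d - 1$ edges, closing those cases. For a parallel pair $e_0, e_0'$ between vertices $u, v$, set $G' = G \setminus e_0'$; partitioning spanning trees of $G$ by their intersection with $\{e_0, e_0'\}$ (necessarily a singleton, since both would create a cycle) yields $\tau(G) = 2\tau(G'\bslash e_0) + \tau(G'\setminus e_0)$, which combines with the deletion-contraction identity $\tau(G') = \tau(G'\bslash e_0) + \tau(G'\setminus e_0)$ to give $\tau(G) = \tau(G') + \tau(G'\bslash e_0) \leq F_d + F_{d-1} = F_{d+1}$ by the inductive hypothesis. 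For a series pair at a degree-$2$ vertex $w$ with distinct neighbors $u, v$ via $e_0, e_0'$, let $G''$ be obtained by suppressing $w$ (removing $w, e_0, e_0'$ and inserting a replacement edge $e_0^* = uv$); an analogous count partitioning spanning trees of $G$ by their intersection with $\{e_0, e_0'\}$ (at least one, to span $w$) gives $\tau(G) = \tau(G''\bslash e_0^*) + 2\tau(G''\setminus e_0^*) = \tau(G'') + \tau(G''\setminus e_0^*) \leq F_d + F_{d-1}$, completing the induction.

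For the matching lower bound, I would build an extremal family $\{G_d\}_{d \geq 2}$ recursively, maintaining the invariant that $G_d$ admits an edge $e^*$ with $\{\tau(G_d\bslash e^*), \tau(G_d\setminus e^*)\} = \{F_d, F_{d-1}\}$, and hence $\tau(G_d) = F_{d+1}$. Beginning from $G_2 = C_2$ (for which the invariant holds with either edge as $e^*$), construct $G_{d+1}$ from $G_d$ by duplicating $e^*$ if $\tau(G_d\bslash e^*) = F_d$ or subdividing $e^*$ if $\tau(G_d\setminus e^*) = F_d$; in either case the parallel/series identities above give $\tau(G_{d+1}) = \tau(G_d) + F_d = F_{d+2}$, and a direct computation using the same identities verifies that a suitable edge of $G_{d+1}$ (for instance, one of the new edges produced by the operation) continues to satisfy the invariant.

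The substantive part of the proof is the upper-bound induction, which is a clean application of the classical parallel and series reduction identities for spanning-tree counts together with \cref{lem:spa}. The main source of care is in the lower-bound construction, where one must verify that an invariant-preserving edge persists after each step; this reduces to elementary bookkeeping but is the most tedious part of the argument.
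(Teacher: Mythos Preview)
Your proposal is correct and follows essentially the same argument as the paper. The upper bound is the same deletion--contraction induction, just phrased in terms of locating a loop/leaf/parallel pair/series pair in $G$ rather than looking at the last operation in a construction sequence from \cref{lem:spa}; the lower-bound construction you describe (alternating duplication and subdivision at a distinguished edge, tracked via the invariant $\{\tau(G_d\bslash e^*),\tau(G_d\setminus e^*)\}=\{F_d,F_{d-1}\}$) is exactly the sequence $G_0,G_1,\ldots$ the paper draws explicitly.
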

\cref{prop:fib} matches the known values of $L(d)$ for $d \leq 5$ and gives improved lower bounds for $d \geq 6$. However, there is still an exponential gap between $F_{d+1}$, which is $\Theta(\varphi^d)$ for $\varphi = \frac{1}{2}(\sqrt{5}+1) \approx 1.61803$ the golden ratio, and the best upper bound, which is $O(2^d/\sqrt{d})$.
\begin{ques}
Is there some $\eps > 0$ such that $L(d) = O((2-\eps)^d)$?
\end{ques}
\begin{figure}
\begin{tikzpicture}[scale=0.5,yscale=.866]
\begin{scope}
\draw (0,0)node[vtx]{};
\node at (0,-1) {$G_0$};
\end{scope}
\begin{scope}[shift={(3,0)}]
\draw (0,0)node[vtx]{} -- (1,2)node[vtx]{};
\node at (0.5,-1) {$G_1$};
\end{scope}
\begin{scope}[shift={(7,0)}]
\draw (0,0)node[vtx]{} to[bend right] (1,2)node[vtx]{} to[bend right] (0,0);
\node at (0.5,-1) {$G_2$};
\end{scope}
\begin{scope}[shift={(11,0)}]
\draw (0,0)node[vtx]{} -- (1,2)node[vtx]{} -- (2,0)node[vtx]{} -- (0,0);
\node at (1,-1) {$G_3$};
\end{scope}
\begin{scope}[shift={(16,0)}]
\draw (1,2)node[vtx]{} to[bend right] (2,0)node[vtx]{} to[bend right] (1,2) -- (0,0)node[vtx]{} -- (2,0);
\node at (1,-1) {$G_4$};
\end{scope}
\begin{scope}[shift={(21,0)}]
\draw (1,2)node[vtx]{} -- (2,0)node[vtx]{} -- (0,0)node[vtx]{} -- (1,2) -- (3,2)node[vtx]{} -- (2,0);
\node at (1.5,-1) {$G_5$};
\end{scope}
\begin{scope}[shift={(27,0)}]
\draw (1,2)node[vtx]{} -- (2,0)node[vtx]{} -- (0,0)node[vtx]{} -- (1,2) -- (3,2)node[vtx]{} to[bend right] (2,0) to[bend right] (3,2);
\node at (1.5,-1) {$G_6$};
\end{scope}
\end{tikzpicture}
\caption{Graphs used in the proof of \cref{prop:fib}.} \label{fig:fib}
\end{figure}
\begin{proof}[Proof of \cref{prop:fib}]
For the bound, we argue by induction on $d$, with the $d = 0$ case being trivial. For any connected series-parallel graph $G$ with $d \geq 1$ edges, by \cref{lem:spa} there must exist a series-parallel graph $G'$ with $d-1$ edges such that $G$ is obtained from $G'$ by either
\begin{itemize}
\item adding a loop;
\item adding a leaf;
\item duplicating an edge $e$ into edges $e_1$ and $e_2$;
\item subdividing an edge $e$ into edges $e_1$ and $e_2$.
\end{itemize}
In the first two cases it is easy to see that $\abs{X(G)} = \abs{X(G')}$, so we are done by the inductive hypothesis. In the third case, note that the spanning trees of $G$ without $e_2$ are in bijection with the spanning trees of $G'$, while the spanning trees of $G$ with $e_2$ are in bijection with the spanning trees of $G'$ containing $e$, which are in turn in bijection with the spanning trees of $G'\bslash e$. Hence
\[\abs{X(G)} = \abs{X(G')} + \abs{X(G'\bslash e)} \leq F_d + F_{d-1} = F_{d+1},\]
where we have used the inductive hypothesis. By similar logic, in the fourth case, we have
\[\abs{X(G)} = \abs{X(G')} + \abs{X(G'\setminus e)} \leq F_d + F_{d-1} = F_{d+1},\]
completing the induction.

To show that $F_{d+1}$ is achievable, consider the series of graphs $G_0, G_1,\ldots$ depicted in \cref{fig:fib}. It is true that $\abs{X(G_0)} = \abs{X(G_1)} = 1 = F_1 = F_2$. Moreover, observe that for even $d \geq 2$, the graph $G_d$ is obtained by duplicating an edge $e$ in $G_{d-1}$ such that $G_{d-1} \bslash e \cong G_{d-2}$. Moreover, for odd $d \geq 3$, the graph $G_d$ is obtained by subdividing an edge $e$ in $G_{d-1}$ such that $G_{d-1} \setminus e \cong G_{d-2}$. Therefore for $n \geq 2$ we have $\abs{X(G_d)} = \abs{X(G_{d-1})} + \abs{X(G_{d-2})}$, which shows that $\abs{X(G_d)} = F_{d+1}$ by induction.
\end{proof}
\begin{table} {\small
\begin{tabular}{CC@{\qquad}CC@{\qquad}CC@{\qquad}CC@{\qquad}CC@{\qquad}CC}  \toprule
d & m(d) & d & m(d) & d & m(d) & d & m(d) & d & m(d) & d & m(d)\\ \midrule
1 & 1 & 6 & 24 & 11 & 343 & 16 & 4480 & 21 & 55296 & 26 & 665496 \\
2 & 2 & 7 & 42 & 12 & 576 & 17 & 7410 & 22 & 90792 & 27 & 1088640 \\
3 & 4 & 8 & 72 & 13 & 960 & 18 & 12240 & 23 & 150336 & 28 & 1783296 \\
4 & 8 & 9 & 122 & 14 & 1608 & 19 & 20202 & 24 & 248832 & 29 & 2915244 \\
5 & 14 & 10 & 204 & 15 & 2680 & 20 & 33552 & 25 & 406944 & 30 & 4763880 \\
\bottomrule
\end{tabular}}
\caption{Values for $m(d)$ for $1 \leq d \leq 30$.} \label{tab:md}
\end{table}
Faced with this surprisingly elegant answer, one might wonder about the maximum of $\abs{Y(G, e)}$ over $(G, e) \in \cG'$ where $e(G)=d+1$; call this quantity $m(d)$. However, $m(d)$ is not so simply described; values for small $d$ are given in \cref{tab:md}, and the graphs that achieve them do not satisfy any nice pattern. However, we can still pin down the exponential growth rate of $m(d)$.
\begin{prop}
For $d \geq 1$, we have $F_{d+2} - 1 \leq m(d) \leq dF_{d+2}/2$.
\end{prop}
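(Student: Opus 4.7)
The proof has two separate parts, an upper bound and a lower bound. For the upper bound, the plan is to use the interpretation of each element of $Y(G, e)$ as a pair of spanning trees of $G$ differing by a swap $e \leftrightarrow e'$ for some $e' \neq e$. Summing over either the tree containing $e$ or the tree not containing $e$ yields the two dual identities
\[|Y(G, e)| = \sum_{T \ni e} (c(T, e) - 1) = \sum_{T \not\ni e} (|C_{T, e}| - 1),\]
where $c(T, e)$ is the size of the fundamental cocut of $e$ in $T$ (the edges of $G$ crossing the partition of $V(G)$ induced by $T - e$) and $|C_{T, e}|$ is the length of the fundamental cycle (the unique cycle in $T + e$). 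Since $G$ has $d + 1$ edges total, both $c(T, e)$ and $|C_{T, e}|$ are at most $d + 1$, so each summand is at most $d$. Adding the two expressions yields $2|Y(G, e)| \leq d \cdot |X(G)|$, and combining with $|X(G)| \leq F_{d+2}$ from \cref{prop:fib} gives $|Y(G, e)| \leq dF_{d+2}/2$.

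For the lower bound, I would construct a family $(G_d, e) \in \cG'$ with $e(G_d) = d + 1$ and $|Y(G_d, e)| \geq F_{d+2} - 1$ by induction on $d$. The base case is $(G_1, e) = (C_2, e)$, for which $|Y| = 1 = F_3 - 1$. For the inductive step, I would obtain $G_{d+1}$ from $G_d$ by either duplicating or subdividing a non-distinguished edge $f_d$, chosen so that $G_d / f_d$ (in the duplication case) or $G_d \setminus f_d$ (in the subdivision case) is itself a Fibonacci-maximum graph on $d$ edges, exactly paralleling the construction in the proof of \cref{prop:fib}; the distinguished edge is inherited from $G_d$. This keeps $|X(G_d)| = F_{d+2}$ throughout the construction.

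Expanding $|Y(G_{d+1}, e)|$ via the fundamental cocut formula should yield a recursion of the form $|Y(G_{d+1}, e)| \geq |Y(G_d, e)| + F_{d+1}$. Combining this with the base case and the telescoping identity $\sum_{k=1}^{d} F_k = F_{d+2} - 1$ then closes the induction. The main obstacle will be establishing this recursion rigorously: the increment $|Y(G_{d+1}, e)| - |Y(G_d, e)|$ involves correction terms counting spanning trees of $G_d$ that contain both $e$ and $f_d$, and bounding these terms from below will likely require strengthening the inductive hypothesis to carry additional structural information --- such as $|X(G_d / e)|$ or the joint distribution of $e$ and $f_d$ across spanning trees of $G_d$ --- alongside $|Y|$ itself.
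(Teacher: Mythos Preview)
Your upper bound argument is correct and essentially equivalent to the paper's: the paper simply observes that $H(G,e)$ is a subgraph of $Q_d$ on $|X(G)| \leq F_{d+2}$ vertices, hence has maximum degree at most $d$, giving $|Y(G,e)| \leq d F_{d+2}/2$. Your fundamental cocut/cycle counts $c(T,e)-1$ and $|C_{T,e}|-1$ are exactly the degrees in $H(G,e)$ of the vertices in $X(G\bslash e)$ and $X(G\setminus e)$ respectively, so the two arguments coincide.

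For the lower bound, however, your proposal is incomplete, and the idea you are missing is not the recursion you sketch but \emph{connectedness}. The paper takes the Fibonacci-extremal graph $G_{d+1}$ from \cref{prop:fib} (which is $2$-connected for $d+1 \geq 2$), chooses an arbitrary edge $e$, and invokes \cref{prop:fullpj} to conclude that $H(G_{d+1},e)$ is connected. Since $H(G_{d+1},e)$ has $|X(G_{d+1}\bslash e)| + |X(G_{d+1}\setminus e)| = |X(G_{d+1})| = F_{d+2}$ vertices and is connected, it must have at least $F_{d+2}-1$ edges --- and that is the entire argument. Your inductive route instead requires controlling how $|Y|$ changes under duplication/subdivision, which, as you yourself note, drags in joint statistics of $e$ and $f_d$ across spanning trees and would force a strengthened hypothesis you have not formulated. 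The connectedness observation bypasses all of that in one line.
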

\begin{proof}
For the lower bound, by the construction in the proof of \cref{prop:fib} there exists a $2$-connected series-parallel graph $G_{d+1}$ with $d+1$ edges and $F_{d+2}$ spanning trees. Choose $e \in E(G_{d+1})$ arbitrarily. Then $(G,e)\in \cG''$, so by \cref{prop:fullpj} the graph $H(G, e)$ is connected. The number of vertices of $H(G, e)$ is $\abs{X(G_{d+1} \bslash e)} + \abs{X(G_{d+1} \setminus e)} = \abs{X(G_{d+1})} = F_{d+2}$, so we conclude that $\abs{Y(G_{d+1}, e)} \geq F_{d+2} - 1$.

For the upper bound, note that for $(G, e) \in \cG'$, the graph $H(G, e)$ has $\abs{X(G)} \leq F_{d+2}$ vertices. However, it is also a subgraph of $Q_d$, so every vertex has degree at most $d$. Thus $H(G, e)$ has at most $dF_{d+2}/2$ edges, as desired.
\end{proof}

\subsection{Completeness in small cases} \label{sec:small}
\begin{figure}
\subcaptionbox{\label{fig:smallv}}[0.3\linewidth]{%
\begin{tikzpicture}[scale=1.5]
\draw (120:1) -- (-120:1) to[bend left=20] node[left]{$I_0 \sqcup I_1$} (120:1) to[bend left=20] (-120:1);
\draw (120:1)to[bend left=10] node[shift={(60:8pt)}]{$I_2$} (0:1) to[bend left=10](120:1);
\draw (0:1)to[bend left=10](-120:1)to[bend left=10](0:1)to[bend left=30] node[shift={(-60:8pt)}]{$I_3$} (-120:1)to[bend left=30](0:1);
\path[every node/.style=vtx] (120:1)node{} (0:1)node{} (-120:1)node{};
\path (1.5,0);
\end{tikzpicture}}%
\subcaptionbox{\label{fig:smalle}}[0.3\linewidth]{%
\begin{tikzpicture}[scale=1.5]
\draw (-120:1) to[bend left=10] (120:1) to[bend left=10] (-120:1) to[bend left=30] node[left]{$I_0 \sqcup I_1$} (120:1);
\draw[ultra thick] (120:1) to[bend left=30] (-120:1);
\draw (120:1)to[bend left=10] node[shift={(60:8pt)}]{$I_2$} (0:1) to[bend left=10](120:1);
\draw (0:1)to[bend left=10](-120:1)to[bend left=10](0:1)to[bend left=30] node[shift={(-60:8pt)}]{$I_3$} (-120:1)to[bend left=30](0:1);
\path[every node/.style=vtx] (120:1)node{} (0:1)node{} (-120:1)node{};
\path (1.5,0);
\end{tikzpicture}}%
\caption{Multigraphs used in the proofs of \subref{fig:smallv} \cref{prop:smallv}, and
\subref{fig:smalle} \cref{prop:smalle}. The distinguished edge in \subref{fig:smalle} is highlighted, and labels give the corresponding indices of a set of parallel edges. The location of the $I_0$ edges is irrelevant to the argument.}
\end{figure}
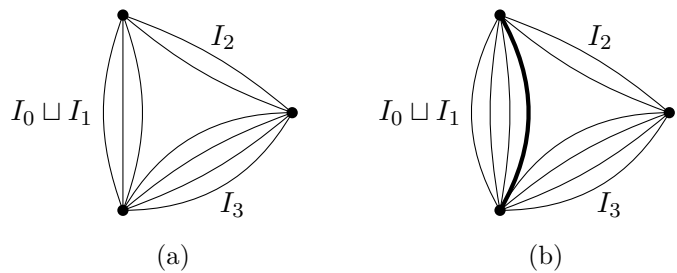
In this section, we show that in certain small cases, \cref{thm:core} give a complete description of $\ell$-degenerate vertex and edge patterns. The constructions in this section are the same as those studied in \cite{EIL24,GM24+}.

This result for vertex patterns was first stated, albeit with significantly different language, as Proposition~1.4 in \cite{Alon24}. We give a proof for completeness.
\begin{prop}[{\cite[Prop.\ 1.4]{Alon24}}] \label{prop:smallv}
If $b \leq 2$ and $X \subseteq L_{a,b}$ is $\ell$-degenerate, then $X \subseteq X(G)$ for some $G \in \cG$ (with $a+b$ edges and $b+1$ vertices).
\end{prop}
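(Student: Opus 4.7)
The cases $b \in \set{0,1}$ are essentially trivial, so I would dispatch them first: any $X \subseteq L_{a,0} = \set{0^a}$ is contained in $X(G)$ where $G$ is the one-vertex graph with $a$ loops, while any $X \subseteq L_{a,1}$ is contained in $X(G)$ where $G$ is the two-vertex multigraph with $a+1$ edges, taking as non-loops precisely those positions used by elements of $X$ (placing the remaining loops at either vertex to retain connectedness).

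The substantive case is $b = 2$. I would identify $X \subseteq L_{a,2}$ with the graph $H$ on vertex set $[a+2]$ whose edges are the pairs of $1$-positions appearing in elements of $X$, and then observe that $X \subseteq X(G)$ for some $G \in \cG$ with $3$ vertices and $a+2$ edges if and only if $H$ admits a proper $3$-coloring. Indeed, a partition $[a+2] = I_0 \sqcup I_1 \sqcup I_2 \sqcup I_3$ (with $I_0$ indexing loops and $I_1, I_2, I_3$ indexing the three classes of non-loop edges joining distinct pairs of vertices of $G$) specifies such a $G$, and $X \subseteq X(G)$ translates precisely to the condition that every edge of $H$ joins distinct parts among $\set{I_1,I_2,I_3}$; isolated vertices of $H$ can go freely into $I_0$, and connectedness of $G$ is easily arranged whenever $a+2 \geq 2$. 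The proposition thus reduces to proving its contrapositive: if $\chi(H) \geq 4$, then $X$ is not $\ell$-degenerate.

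For the contrapositive, the plan is to construct, for each large $n = a'+b'$ with $\min(a', b') \to \infty$, a subset $S \subseteq L_{a',b'}$ of density bounded below by a positive absolute constant such that $S$ contains no embedded copy of $X$. Following the EIL-style template that the section introduction signals, I would fix a balanced coloring $\phi\colon [n] \to [3]$ and take $S$ to consist of the $b'$-subsets whose $\phi$-profile satisfies an appropriately chosen linear constraint over $\setf_2$. The avoidance step then reduces to showing that the existence of an embedding $\psi_p(X) \subseteq S$ would force the induced map $\phi \circ \pi\colon V(H) \to [3]$ to be a proper $3$-coloring of $H$, directly contradicting $\chi(H) \geq 4$.

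The main obstacle is arranging the construction so that simultaneously (i) $\abs{S}$ remains a constant fraction of $\abs{L_{a',b'}}$ uniformly as $b' \to \infty$, since the most naive balance-type conditions produce densities that decay polynomially, and (ii) the algebraic condition is tight enough to force properness of the induced coloring. This balancing act is precisely the substance of the EIL-type linear-algebraic construction referenced at the start of the section, and I expect the completeness argument to mirror that construction closely, with the $\chi(H) \geq 4$ hypothesis replacing the specific role that the graph $K_4$ plays in \cite{EIL24}.
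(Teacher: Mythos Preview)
Your reduction of the $b=2$ case to $3$-colorability of the auxiliary graph $H$ is correct and is exactly the structure that the paper's proof extracts. The gap is in the construction you sketch. A fixed $\phi\colon [n]\to [3]$ together with an $\setf_2$-linear constraint on the $\phi$-profile cannot force the induced map $V(H)\to[3]$ to be proper: for instance, fixing the parities of the three profile counts forces each edge of $H$ to be \emph{monochromatic} under the induced colouring, the opposite of what you want. Properness is a pairwise-distinctness condition that does not reduce to parity data on a $3$-valued profile, so no tweak of this scheme will work.

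The paper's construction is different in a way that matters: each coordinate $i\in[n]$ is assigned a uniformly random vector $v_i\in\setf_2^{b'}$ (high-dimensional target, not $3$-valued), and $S\subseteq L_{a',b'}$ consists of those strings whose $1$-positions index a \emph{basis} of $\setf_2^{b'}$. This is a nonlinear condition, and its expected density is $\prod_{i=1}^{b'}(1-2^{-i})>0.288$ uniformly in $b'$, so your obstacle~(i) simply does not arise. By $\ell$-degeneracy some $\psi_p(X)\subseteq S$; the $b'-2$ fixed $1$-positions of $p$ then span a codimension-$2$ subspace $W$, and the four elements of $\setf_2^{b'}/W$ induce the partition $[a+2]=I_0\sqcup I_1\sqcup I_2\sqcup I_3$. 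The basis condition forces, for every $s\in X$, that its two free $1$-positions land in distinct nonzero cosets---which is exactly the proper $3$-colouring of $H$. Note also that the paper argues in the direct direction (assume $\ell$-degenerate, exhibit $G$), which is cleaner than the contrapositive: you only need \emph{one} embedding to land in $S$, not avoidance over all embeddings.
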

\begin{proof}
The cases $b = 0,1$ are trivial, as $L_{a,0} = X(G)$ where $G$ is the multigraph with one vertex and $a$ loops, and $L_{a,1} = X(G)$, where $G$ is the graph with series-parallel graph with two vertices and $a+1$ parallel edges connecting them.

Consider $X \subseteq L_{a,2}$ for some $a \geq 0$. For some $a' \geq a$ and $b' \geq 2$, define a random subset $S \subseteq L_{a',b'}$ as follows: randomly choose vectors $v_1,\ldots,v_{a'+b'} \in \setf_2^{b'}$, and include a string $s\in L_{a',b'}$ in $S$ if and only if the vectors corresponding the $1$s in $s$ form a basis of $\setf_2^{b'}$. Since the probability that a uniformly random $b' \times b'$ matrix over $\setf_2$ is invertible is $\prod_{i=1}^{b'} (1-2^{-i}) > \prod_{i=1}^\infty (1-2^{-i}) > 0.2887$, the expected value of $\abs{S}$ is greater than $0.2887 \abs{L_{a',b'}}$. Hence by $\ell$-degeneracy there exists some $a'\geq a$, $b' \geq 2$, choice of $v_1,\ldots,v_{a'+b'}$, and choice of $p \in P_{a,2,a',b'}$ such that $\psi_p(X) \subseteq S$.

Suppose the $1$s in $p$ occur in positions $i_1,\ldots,i_{b'-2}$ and $s_k$ occurs at position $j_k$. If $v_{i_1},\ldots,v_{i_{b'-2}}$ are linearly dependent, then the image of $\psi_p$ is disjoint from $S$, a contradiction. Thus they span a subspace $W \subseteq \setf_2^{b'}$ of codimension $2$. For every $k \in [a+2]$ let $w_k$ be the image of $v_{j_k}$ in the $2$-dimensional space $\setf_2^{b'}/W$. This induces a partition $[a+2] = I_0 \sqcup I_1 \sqcup I_2 \sqcup I_3$, where $I_0$ consists of the $k$ for which $w_k = 0$ and $I_1,I_2,I_3$ consist of the $k$ for which $w_k$ is equal to each of the three nonzero vectors in $\setf_2^{b'}/W$. If $s \in X$ has $1$s in locations $k_1$ and $k_2$, then $w_{k_1}$ and $w_{k_2}$ must be nonzero and distinct. It follows that $s \in X(G)$, where $G$ is the graph shown in \cref{fig:smallv}.
\end{proof}
A similar argument works in the case of edges. This generalizes an argument in \cite{GM24+} showing that $L'_{1,1}$ minus an edge is not $\ell$-degenerate, a key lemma in showing that $C_{10}$ is not $g$-degenerate.
\begin{prop} \label{prop:smalle}
If $b \leq 1$ and $Y \subseteq L'_{a,b}$ is $\ell$-degenerate, then $Y \subseteq Y(G, e)$ for some $(G, e) \in \cG'$ (where $G$ has $a+b+2$ edges and $b+2$ vertices).
\end{prop}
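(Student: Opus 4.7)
The plan is to mimic the argument of \cref{prop:smallv}, replacing its symmetric basis condition with an asymmetric linear-algebraic condition that reflects the asymmetry of $Y(G, e)$ between the $\astk$-coordinate and the $1$-coordinates. The case $b = 0$ is immediate: a direct check shows that $L'_{a, 0} = Y(G, e)$ when $G$ is the two-vertex multigraph with $a+2$ parallel edges, one of them being the distinguished $e$, so any $Y \subseteq L'_{a, 0}$ is trivially contained.

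For $b = 1$, I would fix $a' \geq a$, $b' \geq 1$, and a nonzero $t \in \setf_2^{b'+1}$, sample $v_1, \ldots, v_{a'+b'+1} \in \setf_2^{b'+1}$ independently and uniformly, and define $S \subseteq L'_{a', b'}$ to consist of those edges $s$ with $\astk$ at position $i$ and $1$'s at a set $J \subseteq [a'+b'+1]\setminus\set{i}$ of size $b'$ satisfying: (i) $\setmid{v_j}{j \in J} \cup \set{t}$ is a basis of $\setf_2^{b'+1}$; and (ii) $v_i$ lies in the affine coset $t + V$, where $V$ is the span of $\setmid{v_j}{j \in J}$. The probability of (i) is $\prod_{k=1}^{b'}(1-2^{-k}) > \prod_{k \geq 1}(1-2^{-k}) > 0.288$, and conditional on (i) the probability of (ii) is exactly $1/2$, so $\mathbb{E}\abs{S} > 0.14 \abs{L'_{a', b'}}$ uniformly in $a', b'$. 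Condition (ii) is the asymmetric ingredient, mirroring on the algebraic side the distinguished role played by $e$.

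By $\ell$-degeneracy of $Y$, for $\min(a', b')$ sufficiently large some realization of the $v_i$ together with some $p \in P'_{a, 1, a', b'}$ yields $\psi_p(Y) \subseteq S$. Write $v'_j = v_{p^{-1}(s_j)}$ for $j \in [a+2]$ and let $V_0$ be the span of $\setmid{v_k}{p_k = 1}$. Provided $Y$ is nonempty, condition (i) applied to any single $\psi_p(y)$ forces $V_0$ to have codimension exactly $2$, so $\setf_2^{b'+1}/V_0 \cong \setf_2^2$. Partition $[a+2]$ into $I_0 \sqcup I_T \sqcup I_X \sqcup I_Y$ according as the image $\bar{v'}_j$ equals $0$, $\bar t$, or one of the two other nonzero vectors $\bar X, \bar Y$ (which satisfy $\bar X + \bar Y = \bar t$). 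A direct translation of (i) and (ii) into the quotient shows that every $y \in Y$ with $\astk$ at position $j_\astk$ and $1$ at position $j_1$ satisfies $j_1 \in I_X \cup I_Y$ and $j_\astk \in I_T \cup ((I_X \cup I_Y) \setminus I_{\mathrm{class}(j_1)})$.

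To finish, let $G$ be the three-vertex multigraph on $u, v, w$ with distinguished edge $e = uv$, together with $\abs{I_T}$ further $uv$-edges, $\abs{I_X}$ edges $vw$, $\abs{I_Y}$ edges $wu$, and $\abs{I_0}$ loops; computing $Y(G, e)$ directly from the definition recovers exactly the pairs $(j_1, j_\astk)$ described above, so $Y \subseteq Y(G, e)$. Connectedness of $G$ is automatic whenever $Y$ is nontrivial, and in the degenerate cases edges can be shifted among the classes to ensure $(G, e) \in \cG'$ without harming the containment. I expect the main obstacle to be the formulation of condition (ii): the vertex-case basis condition of \cref{prop:smallv} is symmetric in all coordinates, so the auxiliary vector $t$ and the coset condition are the essential new ingredients for capturing the distinguished status of the $\astk$-coordinate on the algebraic side.
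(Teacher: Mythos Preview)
Your argument is correct and essentially the same as the paper's. The only cosmetic differences are that you fix the auxiliary vector $t$ while the paper samples an extra random vector $v_0$, you phrase the $\astk$-condition as a coset condition rather than a second basis condition (these are equivalent since the quotient by $V$ is one-dimensional), and you place the $I_0$-edges as loops whereas the paper puts them parallel to $e$; the paper itself notes that the location of the $I_0$-edges is irrelevant.
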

\begin{proof}
The case $b=0$ is trivial, as $L'_{a,0} = Y(G, e)$, where $G$ is the series-parallel graph with two vertices and $a+2$ parallel edges, and $e$ is an arbitrary edge of $G$.

Consider $Y \subseteq L'_{a,1}$ for some $a \geq 0$. For some $a' \geq a$ and $b' \geq 1$, define a random subset $S \subseteq L'_{a',b'}$ as follows: randomly choose vectors $v_0,v_1,\ldots,v_{a'+b'+1} \in \setf_2^{b'+1}$, and include a string $s\in L_{a',b'}$ in $S$ if and only if $v_0,v_{i_1},\ldots,v_{i_{b'}}$ and $v_{i_*},v_{i_1},\ldots,v_{i_{b'}}$ both form bases of $\setf_2^{b'+1}$, where $i_1,\ldots,i_{b'}$ are the locations of the $1$s and $i_*$ is the location of of the $\ast$. The probability that an arbitrary $s$ is in $S$ is the probability that $v_{i_1},\ldots,v_{i_{b'}}$ are linearly independent times $1/4$, which is $\frac{1}{4}\prod_{i=1}^{b'} (1-2^{-i-1}) > \frac{1}{2}\prod_{i=1}^\infty (1-2^{-i}) > 0.1443$. Thus the expected value of $\abs{S}$ is greater than $0.1443 \abs{L'_{a',b'}}$, so by $\ell$-degeneracy there exists some $a'\geq a$, $b' \geq 2$, choice of $v_0,v_1,\ldots,v_{a'+b'+1}$, and choice of $p \in P'_{a,1,a',b'}$ such that $\psi_p(Y) \subseteq S$.

Suppose the $1$s in $p$ occur in positions $i_1,\ldots,i_{b'-1}$ and $s_k$ occurs at position $j_k$. If $v_0,v_{i_1},\ldots,v_{i_{b'-1}}$ are linearly dependent, then the image of $\psi_p$ is disjoint from $S$, a contradiction. Thus $v_{i_1},\ldots,v_{i_{b'-1}}$ span a subspace $W \subseteq \setf_2^{b'}$ of codimension $2$ not containing $v_0$. For every $k \in [a+2]$ let $w_k$ be the image of $v_{j_k}$ in the $2$-dimensional space $\setf_2^{b'+1}/W$. This induces a partition $[a+2] = I_0 \sqcup I_1 \sqcup I_2 \sqcup I_3$, where $I_0$ consists of the $k$ for which $w_k = 0$, $I_1$ consists of the $k$ for which $w_k$ is the image of $v_0$, and $I_2,I_3$ consist of the $k$ for which $w_k$ is equal to each of the other two nonzero vectors in $\setf_2^{b'}/W$. If $s \in X$ has a $1$ in position $k_1$ and a $\ast$ in position $k_*$, then $w_{k_1}$ and $w_{k_*}$ must be nonzero and distinct, and additionally we cannot have $k_1 \in I_1$. It follows that $s \in Y(G, e)$, where $(G, e)$ is the multigraph with distinguished edge shown in \cref{fig:smalle}.
\end{proof}

By duality, \cref{prop:smallv,prop:smalle} also classify all $\ell$-degenerate subsets of $L_{a,b}$ for $a \leq 2$ and $L'_{a,b}$ for $a \leq 1$. Thus, $L_{3,3}$ and $L'_{2,2}$ are the smallest vertex- and edge-layers for which $\ell$-degeneracy is not completely understood. Perhaps the most intriguing special case is given by the following two subsets.
\begin{ques} \label{ques:k4}
Let
\[X_{16} = L_{3,3} \setminus \set{010101,011010,100110,101001}\]
be a set of size $16$ and $Y_{18} \subseteq L'_{2,2}$ be the set of $18$ edges with both endpoints in
\[(L_{2,3} \cup L_{3,2}) \setminus \set{00011,01100,10101,11010}.\]
Are $X_{16}$ and $Y_{18}$ $\ell$-degenerate? 
\end{ques}
The patterns $X_{16}$ and $Y_{18}$ are noteworthy for two reasons. For one, they are the largest subsets of $L_{3,3}$ and $L'_{2,2}$, respectively, for which $\ell$-degeneracy is not ruled out by the $\setf_2$-based construction used in the proofs of \cref{prop:smallv,prop:smalle}. Moreover, they are equal to $X(K_4)$ and $Y(K_4, e)$ ($e \in E(K_4)$ is arbitrary), respectively, if the definitions of $X(G)$ and $Y(G,e)$ are extended in the natural way to include $G$ that are not series-parallel. Since $K_4$ is the simplest multigraph that is not series-parallel, \cref{ques:k4} in some sense asks whether the series-parallel condition in \cref{thm:mainv,thm:maine} is necessary.

\section{Concluding Remarks} \label{sec:conc}
\subsection{Different notions of degeneracy} \label{sec:containment}
This paper contains results relating to several notions of degeneracy (most notably $g$-, $h$-, and $\ell$-degeneracy) stemming from different notions of pattern containment. Here, we will briefly explain why these notions fundamentally describe the same phenomenon.

Consider a connected subgraph $H$ of the hypercube. Clearly, if some embedding of $H$ in the hypercube is an $h$-degenerate edge pattern, then $H$ is $g$-degenerate. Moreover, the converse is also true. Indeed, there are finitely many ways to realize $H$ as an edge pattern; call them $Y_1, \ldots, Y_m$. If none of $Y_1, \ldots, Y_m$ are $h$-degenerate, there must exist $\delta_1,\ldots,\delta_m > 0$ such that for every $n$ there exists a subset $S_i \subseteq E(Q_n)$ of size at least $\delta_i e(Q_n)$ that doesn't contain $Y_i$. (Note that by a standard averaging argument, $\ex(Q_n, Y_i)/e(Q_n)$ is a decreasing function of $n$.) Observe that $Q_n$ is edge-transitive, so given an arbitrary edge $e \in E(Q_n)$, the probability that $e \in \sigma(S_i)$ for a uniformly random automorphism $\sigma$ is $\abs{S_i}/e(Q_n) \geq \delta_i$. Thus, the expected size of $S=\bigcap_{i=1}^m \sigma_i(S_i)$ for uniformly random automorphisms $\sigma_1,\ldots,\sigma_m$ is at least $\delta_1\delta_2 \cdots \delta_m e(Q_n)$. Since $S$ doesn't contain any $Y_i$ and hence cannot contain $G$, we have shown that $\ex(Q_n, H) \geq \delta_1\delta_2 \cdots \delta_m e(Q_n)$ and thus that $H$ is not $g$-degenerate.

Similar arguments show the following results:
\begin{itemize}
\item The quantity $L(d)$, defined in \cref{sec:quant} to be the size of the largest $h$-degenerate subset of $V(Q_d)$, is also the largest $m$ such that the size of the largest subset of $V(Q_n)$ intersecting every $d$-dimensional subcube in fewer than $m$ points is $o(2^n)$. This was the original definition given in \cite{JT10}.
\item A vertex or edge pattern is $h$-degenerate if and only if every way to embed it in a layer is $\ell$-degenerate.
\end{itemize}

\subsection{Completeness} \label{sec:opt}
An obvious unanswered question is whether the $h$-degenerate patterns found in this paper are all the $h$-degenerate patterns that exist. We believe there are reasons to take either side, which we will briefly discuss below.

The main reason to believe that there are no more $h$-degenerate patterns is by analogy with other Tur\'an-type problems. As mentioned in the introduction, the degenerate $r$-uniform hypergraphs are the $r$-partite $r$-uniform hypergraphs, which are precisely the hypergraphs that can be obtained by blowing up a single edge and possibly deleting some edges. Here, a similar blowing-up process appears in the form of duplication and coduplication, with the structure of series-parallel graphs arising naturally from the interactions of these two operations. It would be natural to conjecture that the $h$-degenerate patterns are precisely those obtained from duplicating and coduplicating trivial patterns.

On the other hand, to show that no other $h$-degenerate patterns exist would require constructing dense subsets of the hypercube that avoid certain patterns. Currently, the only known method for accomplishing this is to use linear algebra over $\setf_2$, which exploits a rigid algebraic structure. There is no particular reason to believe that algebraic constructions ruling out more patterns must exist.

\subsection{Properties of degenerate patterns}
The sets $X(G)$ and $Y(G, e)$ have a number of interesting properties, and it may be possible to show that some of these properties hold for all degenerate patterns without classifying them. For instance, $Y(G, e)$ is defined to be an induced subgraph of the hypercube.
\begin{ques}
Is every maximal $h$- (or $\ell$-)degenerate edge pattern an induced subgraph of the hypercube?
\end{ques}

Moreover, the sets $X(G)$ and $Y(G,e)$ can be related to each other. Given $Y\subseteq L'_{a,b}$, define the set $\Phi(Y) \subseteq L_{a+1,b+1}$ to consist of strings such that deleting the last character yields an endpoint of an edge in $Y$. Note that if $(G, e) \in \cG'$, we have $\Phi(Y(G, e)) = X(G)$. Conversely, given $X \in L_{a+1,b+1}$ and $i \in [a+b+2]$, define $\Psi_i(X)\subseteq L'_{a,b}$ to be edges of the graph induced on the image of $X$ under the map $L_{a+1,b+1} \to L_{a+1,b} \cup L_{a,b+1}$ that forgets the $i$ coordinate. Note that $(G, e) \in \cG'$ and $e$ is the $i$th edge of $G$, then $\Psi_i(X(G)) = Y(G, e)$.
\begin{ques}
Suppose $X \in L_{a+1,b+1}$ is $\ell$-degenerate. Must $\Psi_i(X)$ be $\ell$-degenerate for all $i \in [a+b+2]$?
\end{ques}
\begin{ques}
Suppose $Y \in L'_{a,b}$ is $\ell$-degenerate. Must $\Phi(Y)$ be $\ell$-degenerate?
\end{ques}

\subsection{Relation to matroids}
The vertex patterns that appear in this paper are often the basis sets of matroids. For instance, for $G \in \cG$, the set $X(G)$ is the basis set of the \vocab{series-parallel matroid} $M(G)$; more generally, duplication and coduplication correspond to \vocab{parallel} and \vocab{series extensions} of matroids, respectively. Moreover, the construction in the proof of \cref{prop:smallv} is simply the basis set of a randomly generated binary matroid. It would be interesting to explore this connection further; an especially tantalizing possibility would be if there exists a well-known family of matroids whose basis sets yield new constructions ruling out the degeneracy of certain vertex patterns.

One possible direction is the following: recent work of van der Pol, Walsh, and Wigal \cite{Pol2025} defined, for nonnegative integers $n \geq r$ and a uniform matroid $U$, the quantity $\ex_{\sM}(n, r, U)$ to be the maximum number of bases an $n$-element, rank-$r$ matroid can have while avoiding $U$ as a minor. It is possible to extend this definition to all finite matroids, by calling a matroid $M$ a \vocab{weak minor} of $N$ if a relaxation of $M$ is a minor of $N$. Then, we let $\ex_\sM(n, r, M)$ be the maximum number of bases an $n$-element, rank-$r$ matroid can have while avoiding $M$ as a weak minor. If $X$ and $X'$ are the basis sets of $M$ and $N$, interpreted as subsets of the hypercube, $M$ is a weak minor of $N$ if and only if there is some oriented embedding $\psi$ such that $\psi(X)\subseteq X'$. As a result, we find that $\ex_\sM(n, r, M) \leq \ex(L_{n-r,r}, X)$.

Given this definition of $\ex_\sM(n, r, M)$, we may now define the degeneracy of matroids similarly to $\ell$-degeneracy; the above inequality shows that if the basis set of a matroid is $\ell$-degenerate, the matroid is degenerate. In particular, the results of this paper show that series-parallel matroids are degenerate. Moreover, by the same argument as \cref{prop:smallv}, every degenerate matroid must have a restriction that is a binary matroid. An interesting question, parallel to \cref{ques:k4}, is whether the graphical matroid of $K_4$ is degenerate.

\section*{Acknowledgments}
This work was supported by the NSF Graduate Research Fellowships Program (grant number: DGE-2039656). The author would like to thank Noga Alon and Maria Axenovich for helpful comments and discussions.

\printbibliography
\end{document}